\newcommand{\Ker}{\text{Ker}}
\newcommand{\RR}{\mathbb{R}}
\numberwithin{equation}{section}
\def\eqref#1{(\ref{#1})}
\newcommand{\g}{{\mathfrak g}}
\newcommand{\Z}{{\mathbb Z}}
\newcommand{\C}{{\mathbb C}}
\newcommand{\R}{{\mathbb R}}
\renewcommand{\H}{{\mathbb H}}
\def\1{\sqrt{-1}\:}
\newcommand{\cntrct}                
{\hspace{2pt}\raisebox{1pt}{\text{$\lrcorner$}}\hspace{2pt}}
\newcommand{\arrow}{{\:\longrightarrow\:}}
\renewcommand{\bar}{\overline}
\renewcommand{\phi}{\varphi}
\renewcommand{\epsilon}{\varepsilon}
\renewcommand{\leq}{\leqslant}
\newcommand{\id}{\operatorname{\text{\sf id}}}
\newcommand{\Diff}{\operatorname{Diff}}
\newcommand{\Deck}{\operatorname{Deck}}
\newcommand{\Ver}{\operatorname{Vert}}
\newcommand{\Ham}{\operatorname{Ham}}
\renewcommand{\Im}{\operatorname{Im}}
\newcounter{Mycounter}[section]
\newcounter{lemma}[section]
\newcounter{claim}[section]
\newcounter{sublemma}[section]
\newcounter{corollary}[section]
\newcounter{theorem}[section]
\newcounter{conjecture}[section]
\newcounter{proposition}[section]
\newcounter{definition}[section]
\renewcommand{\thedefinition} {{Definition~\thesection.\arabic{definition}}}
\newcommand{\definition}{%

     \setcounter{definition}{\value{Mycounter}}

     \refstepcounter{definition}

     \stepcounter{Mycounter}

     {\medskip\noindent \bf \thedefinition:\ }}
\newcounter{example}[section]
\newcounter{remark}[section]
\renewcommand{\theremark}{{Remark \thesection.\arabic{remark}}}
\newcommand{\remark}{%

     \setcounter{remark}{\value{Mycounter}}

     \refstepcounter{remark}

     \stepcounter{Mycounter}

     {\medskip\noindent \bf \theremark:\ }}
\newcounter{problem}[section]
\newcounter{question}[section]
\def\blacksquare{\hbox{\vrule width 5pt height 5pt depth 0pt}}
\def\endproof{\blacksquare}
\begin{document}

\newpage

\title{Locally conformally symplectic bundles}
\author{Alexandra Otiman}

\date{\today}
\address{Institute of Mathematics ``Simion Stoilow'' of the Romanian Academy\\
21, Calea Grivitei Street, 010702, Bucharest, Romania {\em and} University of Bucharest, Faculty of Mathematics and Computer Science, 14 Academiei Str., Bucharest, Romania}
\email{alexandra\_otiman@yahoo.com}

\abstract A locally conformally symplectic (LCS) form is an almost symplectic form $\omega$ such that  a closed one-form $\theta$ exists with $d\omega=\theta\wedge\omega$. A fiber bundle with LCS fiber $(F, \omega,\theta)$ is called LCS if the transition maps are diffeomorphisms of $F$ preserving $\omega$ (and hence $\theta$). In this paper, we find conditions for the total space of an LCS fiber bundle to admit an LCS form which restricts to the LCS form of the fibers. This is done by using the coupling form introduced by Sternberg and Weinstein,  \cite{gls}, in the symplectic case. The construction is related to an adapted Hamiltonian action called twisted Hamiltonian which we study in detail. Moreover, we give examples of such actions and discuss  compatibility properties with respect to LCS reduction of LCS fiber bundles. We end with a glimpse towards the locally conformally K\"ahler case.\\[.1in]

\noindent{\bf Keywords:} Locally conformally symplectic, fiber bundle, twisted Hamiltonian, coupling form, fat connection, Hopf manifold.\\
\noindent{\bf 2010 MSC: 53D05, 53D20, 53C55}
\endabstract

\maketitle

\tableofcontents

\section{Introduction}

Locally conformally symplectic manifolds (shortly LCS), a subclass of almost symplectic manifolds,  were introduced by P. Liebermann in \cite{lib}, but their modern study began with J. Lefebvre, \cite{lef}, and especially with I. Vaisman in \cite{v}.
From a conformal viewpoint,  LCS manifolds might be viewed as  closest to symplectic manifolds, meaning they are endowed with a nondegenerate 2-form $\omega$ and a covering $(U_{\alpha})$ of the manifold such that there exist smooth functions $f_{\alpha}$ on $U_{\alpha}$ with $d(e^{-f_\alpha}\omega)=0$ (see also \cite{lee}, where such an equation was first considered). As $\omega$ is non-degenerate, the local 1-forms $df_{\alpha}$ glue up to a global closed form and motivate the following two equivalent definitions:
 
\begin{definition} A manifold $M$ is called {\em locally conformally symplectic} if there exists a non-degenerate 2-form $\omega$ and a closed 1-form $\theta$ such that $d \omega = \theta \wedge \omega$.
\end{definition}

\begin{definition} A manifold $M$ is locally conformally symplectic if there exists a symplectic covering ($\tilde{M}, \Omega$) of $M$ with the deck group acting by positive homotheties of $\Omega$. 
\end{definition}

\begin{remark}
The closed one-form $\theta$ is called the {\em Lee form} (after the name of  H.C. Lee, \cite{lee}). Defining  $d_\theta=d - \theta \wedge \cdot$, the LCS condition reads $d_{\theta}\omega=0$, formally identical with the symplectic one. Note that $d\theta=0$ implies $d_\theta^2=0$ and in fact the operator $d_\theta$ produces the Morse-Novikov cohomology, also called Lichnerowicz or twisted cohomology by several authors.
\end{remark}

\smallskip

The second  definition tells us that it is natural to consider the following object associated to a symplectic covering (on which the Lee form is exact) of an LCS manifold. Let $\chi : \Deck(\tilde{M}/M) \rightarrow R^{+}$ be the map associating to an element $\gamma$ of $\Deck(\tilde{M}/M)$ the scalar factor of the corresponding homothety. One can easily show that $\chi$ is a group morphism. We shall call an object $\alpha$ on $\tilde{M}$ with the property that $\gamma^*\alpha=\chi(\gamma)\alpha$ \noindent{\em{automorphic}}. For instance, the symplectic form on $\tilde{M}$ is automorphic. Let us denote by $L$ the line bundle $\tilde{M}\times_{\chi}\mathbb{R}$. It is clear that $L$ is isomorphic to the trivial bundle, but any automorphic object on $\tilde{M}$ can be regarded as an object on $M$ with values in $L$.

Many examples of LCS manifolds come from locally conformally K\" ahler (LCK) geometry by forgetting the complex structure (see \cite{do}, \cite{ov} and the examples and references therein),  but  examples which cannot be  LCK where constructed too, {\em e.g.} \cite{bm}. 

\smallskip

The aim of this paper is to extend the following notion: 

\begin{definition}  A fiber bundle with symplectic fiber $(F, \omega)$, $F \rightarrow M \rightarrow B$, is called a {\em symplectic bundle} if the transition maps are diffeomorphisms of $F$ preserving $\omega$. 
\end{definition}

\smallskip

The primary motivation for introducing symplectic bundles was providing new examples of symplectic manifolds. Weinstein, Sternberg, Guillemin and Lerman studied the possibility of endowing the total space of such a fibration with a symplectic form which coincides with the symplectic form of the fibres, when restricted to any fibre. In \cite{s}, Sternberg constructed  a closed two-form with such properties, provided some additional requirements are considered, which he called {\em coupling form}. Weinstein further related it to the notion of {\em fat connection}  in order to obtain a symplectic form on the fiber bundle. The coupling form has been extended to more general settings. In \cite{vo}, Vorobjev introduced and studied a subclass of Poisson structures on fiber bundles, called coupling tensors. Later, Vaisman extended Vorobjev's results from fiber bundles to foliated manifolds and moreover, for Jacobi structures (see \cite{va2}). Fat connections were studied in \cite{w} and the definition is the following:

\definition Let $G$ be a Lie group with Lie algebra $\g$ and let $P$ be a $G$-principal bundle. Let $A$ be a $\g$ valued two-form which is the curvature of a connection form $a$ in $\Omega^1(P, \g)$. A point $f$ in $g^*$ is called {\emph{fat}} if $f \circ A: \Ker a \times \Ker a \rightarrow \mathbb{R}$ is nondegenerate.

\smallskip

We are going to extend the following result to the LCS case.
\begin{theorem} {\rm \cite{s,w}}  Let $(F, \omega)$ be a symplectic manifold with a Hamiltonian action of a Lie group $G$ on $F$. If $\mu: F \rightarrow g^*$ is the momentum map, then any  connection on a $G$-principal bundle $P$ which is fat at all the points in $\mu(F)$  induces a symplectic form on $P \times_{G}F$. 
\end{theorem}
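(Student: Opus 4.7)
The plan is to construct an explicit candidate 2-form on $P\times F$, show it is basic for an appropriate $G$-action, and use the fatness hypothesis to prove nondegeneracy of the descended form. Let $\pi_P\colon P\times F \to P$ and $\pi_F\colon P\times F\to F$ denote the projections, let $a\in\Omega^1(P,\g)$ be the connection form and $A\in\Omega^2(P,\g)$ its curvature, and pair $\mu\circ\pi_F$ with $a\circ d\pi_P$ to obtain the scalar 1-form $\alpha := \langle \mu, a\rangle\in\Omega^1(P\times F)$. Set
\[
\tilde\Omega := \pi_F^*\omega - d\alpha.
\]
By construction $\tilde\Omega$ is closed, so the problem reduces to showing that $\tilde\Omega$ is basic for the diagonal action $g\cdot(p,f) = (pg, g^{-1}f)$ and that the resulting form on $M := P\times_G F$ is nondegenerate.

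To descend $\tilde\Omega$, I would verify $G$-invariance using the equivariances $\mu(g^{-1}f) = \mathrm{Ad}^*_g\mu(f)$ and $R_g^*a = \mathrm{Ad}_{g^{-1}}a$, and horizontality by computing $\iota_{X_\xi}\tilde\Omega$ for the infinitesimal generator $X_\xi(p,f) = (\xi_P(p), -\xi_F(f))$. This contraction expands into three pieces coming from $\pi_F^*\omega$, from $\langle d\mu\wedge a\rangle$, and from $\langle\mu, da\rangle$. Combining the Cartan structure equation $da = A - \tfrac{1}{2}[a,a]$ with the horizontality of $A$, the moment equation $\iota_{\xi_F}\omega = d\mu_\xi$, the infinitesimal equivariance $d\mu(\xi_F) = \mathrm{ad}^*_\xi\mu$, and the pairing identity $\langle\mathrm{ad}^*_\xi\mu, \eta\rangle = -\langle\mu, [\xi, \eta]\rangle$, the three pieces cancel and yield $\iota_{X_\xi}\tilde\Omega = 0$. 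Hence $\tilde\Omega$ descends to a closed 2-form $\Omega$ on $M$.

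For nondegeneracy at $[p,f]\in M$, I use the identification $T_{[p,f]}M \cong H_p\oplus T_fF$, where $H_p := \Ker a_p$ is the horizontal subspace, obtained as a complement inside $T_{(p,f)}(P\times F)$ to the orbit direction $\{(\xi_P(p), -\xi_F(f)) : \xi\in\g\}$. In this decomposition, a direct computation shows that $\tilde\Omega$ becomes block diagonal: on $T_fF$ it reduces to $\omega$ (the pullbacks $a$ and $d\mu$ vanish on factors from the wrong side); on $H_p$ it becomes $-\langle\mu(f), A_p(\cdot,\cdot)\rangle$ (using $a|_{H_p}=0$ together with the Cartan equation, so only the curvature survives); and the cross terms vanish because $a$ and $da$ are pulled back from $P$ while $\omega$ and $d\mu$ are pulled back from $F$. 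Nondegeneracy on $T_fF$ is immediate, and nondegeneracy on $H_p$ is exactly the fatness of the covector $\mu(f)\in\g^*$, which holds by hypothesis.

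The main obstacle is the horizontality identity $\iota_{X_\xi}\tilde\Omega = 0$: the three contributions do not vanish individually, and their cancellation requires the precise interplay of the moment map equation, the infinitesimal equivariance of $\mu$, and the structure equation for the curvature. Keeping signs straight in the pairing $\langle\mathrm{ad}^*_\xi\mu, \eta\rangle = -\langle\mu, [\xi, \eta]\rangle$ also forces the choice of sign in the coupling 1-form $\alpha$, and thus the definition of $\tilde\Omega$; this sign bookkeeping is the only delicate point in an otherwise algorithmic verification.
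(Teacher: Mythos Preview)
Your argument is correct and follows Sternberg's original minimal-coupling route: build the manifestly closed form $\pi_F^*\omega - d\langle\mu,a\rangle$ on $P\times F$, verify it is basic, and descend. The paper, in proving its LCS analogue, takes the dual route from Guillemin--Lerman--Sternberg: it defines the coupling form directly on $P\times_G F$ by prescribing its vertical--vertical, horizontal--vertical, and horizontal--horizontal components, and then verifies (twisted) closedness through a four-case analysis. Your approach gets closedness for free but must establish basicness, which is where the moment-map equation, the equivariance of $\mu$, and the structure equation all enter at once; the paper's approach never checks basicness---the form is born on the quotient---but pays with the lengthy case-by-case computation of $d\omega$. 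Both arrive at the same block-diagonal form relative to the horizontal/vertical splitting, and the fatness argument for nondegeneracy is identical. One point worth flagging: you rely on $\mathrm{Ad}^*$-equivariance of $\mu$, which the stated hypothesis does not make explicit; in the paper's LCS setting this is automatic (Hamiltonians are unique since $H^0_\theta=0$, so equivariance follows from $G$-invariance of $\omega$), whereas in the pure symplectic case it is the standard extra assumption usually packaged into the phrase ``Hamiltonian action with moment map.''
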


\smallskip

In order to find the proper analogue in LCS setting, we define locally conformally symplectic bundles and we use, instead of Hamiltonian actions, {\em twisted Hamiltonian actions} defined by Vaisman in \cite{v}. 

As the LCS condition is conformally invariant (if $\omega$ is LCS, then any $e^f\omega$ is LCS too), one could work with conformal actions w.r.t the LCS form, as Haller and other authors do.  However, we shall work in a more restrictive frame by requiring  the action of a Lie group to preserve the LCS form. Since we shall usually work with compact groups and by an averaging procedure one can find an invariant LCS form, we shall consider only actions preserving the LCS form. Moreover, this is very helpful in extending the result of Sternberg and Weinstein, as we need a field of $G$-invariant forms on the fibers. 

Let $G$ be a Lie group with Lie algebra $\g$.

\begin{definition}  Let $\rho:G\arrow\Diff(M)$ be an  action of $G$ on an LCS manifold $(F, \omega, \theta)$. If $\omega$ is $G$--invariant and if $i_{\rho(v)}\omega$ is $d_{\theta}$ - exact,  the action is called {\em twisted Hamiltonian}. A choice of a smooth function $\psi: \g \rightarrow \mathcal{C}^{\infty}(F)$ such that $i_{\rho(v)}\omega=d_{\theta}\psi(v)$ will give a momentum map $\mu: F \rightarrow g^{*}$.
\end{definition}

\smallskip

The main result we prove in this paper is:

\begin{theorem} \label{main}Let $(F, \omega, \theta)$ be a locally conformally symplectic manifold not globally conformally symplectic and let  $G$ be a Lie group acting on $F$ by diffeomorphisms preserving $\omega$ (and hence $\theta$). If the action of $G$ is twisted Hamiltonian and if $\mu:F \rightarrow g^{*}$ is a momentum map of the action, then any connection on a $G$ - principal bundle $P$ which is fat at $\mathrm{Im} \mu$ induces an LCS structure on $P \times_{G} F$. 
\end{theorem}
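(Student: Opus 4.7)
The plan is to adapt the classical Sternberg--Weinstein minimal coupling construction to the LCS setting by replacing $d$ with $d_\theta$ in the correction term. On the product $P\times F$ I would define
\[
\Omega \ := \ p_F^*\omega \ - \ d_{\tilde\theta}\langle\mu, a\rangle,
\]
where $a\in\Omega^1(P,\g)$ is the connection form (pulled back to $P\times F$), $\tilde\theta:=p_F^*\theta$, and $\langle\mu,a\rangle$ is the $1$-form on $P\times F$ whose value at $(p,x)$ on a vector $(u,w)\in T_pP\oplus T_xF$ is $\langle\mu(x),a_p(u)\rangle$. Since $d\theta=0$, one has $d(d_{\tilde\theta}\eta)=\tilde\theta\wedge d_{\tilde\theta}\eta$ for any form $\eta$, hence
\[
d\Omega \ = \ \tilde\theta\wedge p_F^*\omega \ - \ \tilde\theta\wedge d_{\tilde\theta}\langle\mu,a\rangle \ = \ \tilde\theta\wedge\Omega,
\]
so $\Omega$ already satisfies the LCS relation with Lee form $\tilde\theta$ before descent.

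The second step is to show that $\Omega$ is $G$-basic for the diagonal action $g\cdot(p,x)=(pg,g^{-1}x)$, so it passes to a well defined $2$-form $\bar\Omega$ on $E:=P\times_G F$, with $\tilde\theta$ descending to a closed $1$-form that I again denote by $\theta$. Invariance follows from $G$-invariance of $\omega$, $\mathrm{Ad}$-equivariance of $a$, and $\mathrm{Ad}^*$-equivariance of $\mu$ (which can be arranged, for instance by averaging when $G$ is compact). For horizontality I need $i_\xi\Omega=0$ for every fundamental vector field $\xi=(v^P,-\rho(v))$ of the diagonal action. The crucial auxiliary identity is
\[
i_{\rho(v)}\theta=0,
\]
obtained by combining $L_{\rho(v)}\omega=0$, the LCS relation $d\omega=\theta\wedge\omega$, the momentum map equation $i_{\rho(v)}\omega=d_\theta\psi(v)$, and the non-degeneracy of $\omega$: a short calculation gives $L_{\rho(v)}\omega=(i_{\rho(v)}\theta)\omega$, forcing $i_{\rho(v)}\theta=0$. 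With this identity and the tautological equality $i_{v^P}a=v$, Cartan's formula yields $i_\xi p_F^*\omega=-p_F^*d_\theta\psi(v)$ and $i_\xi d_{\tilde\theta}\langle\mu,a\rangle=-p_F^*d_\theta\psi(v)$, so $i_\xi\Omega=0$. The descended form then satisfies $d\bar\Omega=\theta\wedge\bar\Omega$.

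Finally, I would establish non-degeneracy of $\bar\Omega$ using the splitting $T_{[p,x]}E\cong\ker a_p\oplus T_xF$ induced by the connection. A direct case-by-case check shows that, with respect to this decomposition, $\bar\Omega$ is block diagonal:
\[
\bar\Omega\bigl((u_1,w_1),(u_2,w_2)\bigr) \ = \ \omega_x(w_1,w_2) \ - \ \langle\mu(x),A(u_1,u_2)\rangle,
\]
the mixed terms vanishing because $a(u_i)=0$ on horizontal vectors and $\theta$ has no component along the $P$-factor. The vertical block is the LCS form $\omega$, non-degenerate by assumption, while the horizontal block is $-\mu(x)\circ A$, non-degenerate by the fatness hypothesis at the point $\mu(x)\in\mathrm{Im}\,\mu$; hence $\bar\Omega$ is non-degenerate and $(E,\bar\Omega,\theta)$ is the desired LCS structure. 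Among these steps, the main technical obstacle is the auxiliary identity $i_{\rho(v)}\theta=0$, without which both the descent argument and the vanishing of the mixed block in the non-degeneracy check would fail; a secondary point is arranging $\mathrm{Ad}^*$-equivariance of $\mu$ so that $\langle\mu,a\rangle$ is genuinely $G$-invariant.
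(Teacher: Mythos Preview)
Your approach is correct and genuinely different from the paper's. The paper works directly on the quotient $E=P\times_G F$: it defines the Lee form $\Theta$ by descending $(0,\theta)$, then defines the coupling $2$-form \emph{piecewise} (equal to $\omega_b$ on vertical pairs, zero on mixed pairs, and $\mu(f)(A_p(\cdot,\cdot))$ on horizontal pairs), and then verifies $d_\Theta$-closedness by a four-case analysis (VVV, VVH, HHV, HHH), each case requiring a separate Cartan-formula computation. Your route---writing the global formula $\Omega=p_F^*\omega-d_{\tilde\theta}\langle\mu,a\rangle$ upstairs on $P\times F$, observing that $d_{\tilde\theta}$-closedness is tautological, and then checking that $\Omega$ is $G$-basic---collapses that entire case analysis into the single identity $d_{\tilde\theta}^2=0$. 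The block-diagonal computation you sketch is exactly the paper's piecewise definition recovered a posteriori, so the two forms agree (up to an irrelevant overall sign). Both proofs hinge on the same auxiliary identity $\theta(\rho(v))=0$, proved the same way.

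There is one genuine gap in your version. You flag $\mathrm{Ad}^*$-equivariance of $\mu$ as ``a secondary point'' to be ``arranged, for instance by averaging when $G$ is compact''. But the theorem does not assume $G$ compact, so averaging is not available, and without equivariance your form $\langle\mu,a\rangle$ is not $G$-invariant and your descent argument breaks. The fix is to notice that equivariance is \emph{automatic} here, and this is precisely where the hypothesis ``not globally conformally symplectic'' enters: since $\theta$ is not exact, $H^0_\theta(F)=0$, so each $\psi(v)$ is the \emph{unique} solution of $i_{\rho(v)}\omega=d_\theta\psi(v)$. Then $g^*\psi(v)$ and $\psi(\mathrm{Ad}_{g^{-1}}v)$ satisfy the same $d_\theta$-equation, hence coincide, giving $\mu(g\cdot x)=\mathrm{Ad}^*_{g^{-1}}\mu(x)$. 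The paper uses this uniqueness too (to make $\lambda:\mathrm{Ham}(F)\to C^\infty(F)$ well defined and to run the Jacobi-identity argument in the HHH case), but packages it differently. Once you insert this observation, your proof is complete and arguably cleaner than the original.
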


\smallskip

We shall give the proof in Sections 3 and 4 and devote Section 5 to a discussion of twisted Hamiltonian actions on LCS manifolds, which we study in detail, one of the important results being \ref{minimal}. Section 5 also contains several worked examples. In Section 6, we study compatibility properties for the fiber and the total space of an LCS fibration with respect to LCS reduction, confining ourselves to actions of abelian groups. Finally, in Section 7 we investigate the possibility of replicating the construction of a coupling form in the LCK setting, but the complex case seems to be very rigid and the general result is not satisfactory.

\section{Preliminaries}

The definition of symplectic bundle was motivated by the following proposition noted in \cite{ms}.

\begin{proposition}\label{compsym} Let $F \rightarrow M \rightarrow B$ be a fiber bundle with compact fiber and let $M$ be endowed with a symplectic form which restricted to the fibers remains symplectic. Then $F \rightarrow M \rightarrow B$ admits the structure of a symplectic fiber bundle. 
\end{proposition}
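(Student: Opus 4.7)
The plan is to apply Moser's stability theorem fiberwise, in its smoothly parametrized version, with the compactness of $F$ playing the essential role. The goal is to refine arbitrary local trivializations into ones whose fiber restrictions are symplectomorphisms onto a single fixed model $(F, \omega_F)$; the transition maps then automatically land in $\operatorname{Symp}(F, \omega_F)$.

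First, I would choose a cover $\{U_\alpha\}$ of $B$ by contractible open sets over which $\pi\colon M\to B$ admits local trivializations $\phi_\alpha\colon \pi^{-1}(U_\alpha)\to U_\alpha\times F$. Via $\phi_\alpha$, pull back $\Omega$ to a closed 2-form $\tilde\Omega_\alpha$ on $U_\alpha\times F$; its restriction to a slice $\{b\}\times F$, which I denote $\omega_b^\alpha$, is symplectic by hypothesis and depends smoothly on $b$. The next step is to show that the class $[\omega_b^\alpha]\in H^2(F;\mathbb R)$ is independent of $b\in U_\alpha$: for any smooth curve $b_t$ in $U_\alpha$, writing $X_t$ for the lift of $\dot b_t$ to the product, Cartan's formula gives $\partial_t \omega_{b_t}^\alpha = d(\iota_{X_t}\tilde\Omega_\alpha)|_{\{b_t\}\times F}$ because $\tilde\Omega_\alpha$ is closed, hence $[\omega_{b_t}^\alpha]$ is constant in $t$.

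Next, I would invoke the parametric Moser stability theorem. Since $F$ is compact and $\{\omega_b^\alpha\}_{b\in U_\alpha}$ is a smooth family of symplectic forms with constant cohomology class, a primitive $\sigma_b$ of $\partial_b \omega_b^\alpha$ can be chosen smoothly in $b$; solving $\iota_{X_b}\omega_b^\alpha = -\sigma_b$ and integrating produces a smooth family of diffeomorphisms $\psi_b^\alpha\colon F\to F$ with $\psi_{b_0}^\alpha=\mathrm{id}$ and $(\psi_b^\alpha)^{*}\omega_b^\alpha=\omega_{b_0}^\alpha$. Define a new trivialization by
\[
\tilde\phi_\alpha(p)=\bigl(\pi(p),\,(\psi_{\pi(p)}^\alpha)^{-1}(\mathrm{pr}_F\phi_\alpha(p))\bigr),
\]
so that in the new chart the restriction of $\Omega$ to each fiber becomes the fixed reference form $\omega_{b_0}^\alpha$ on $F$.

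Finally I would globalize. Assuming $B$ connected (otherwise one argues component by component), the argument of the previous paragraph together with the overlaps of the cover shows that all the reference forms $\omega_{b_0}^\alpha$ represent the same class in $H^2(F;\mathbb R)$. Fixing any symplectic $\omega_F$ in this class, Moser's theorem supplies diffeomorphisms relating each $\omega_{b_0}^\alpha$ to $\omega_F$; post-composing the $\tilde\phi_\alpha$ with these produces trivializations whose fiber restriction is a symplectomorphism onto $(F,\omega_F)$. The transition functions therefore take values in $\operatorname{Symp}(F,\omega_F)$, providing the desired symplectic fiber bundle structure. The main technical obstacle is the smoothly parametrized Moser argument, and this is exactly where the compactness hypothesis on $F$ enters: without it, solving $\iota_{X_b}\omega_b^\alpha=-\sigma_b$ smoothly in $b$ need not integrate to a global flow on $F$.
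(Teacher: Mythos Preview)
Your proposal is correct and follows the same approach the paper indicates: the paper does not spell out a proof of this proposition but attributes it to \cite{ms} and remarks that it ``rests on using the Moser trick for symplectic forms,'' which is precisely what you do; the detailed proof the paper gives for the LCS analogue (\ref{complcs}) has the same architecture as yours. One small caveat: in your globalization step you invoke Moser's theorem to relate $\omega_{b_0}^\alpha$ to a fixed $\omega_F$ merely because they are cohomologous, but Moser only guarantees a symplectomorphism when the two forms are joined by a \emph{path} of symplectic forms in the same class---you have this (via paths through overlapping charts, as you hint), but it should be stated explicitly; the paper's proof of \ref{complcs} sidesteps this by arranging, after an iterative refinement, that all reference forms are taken at a single common point $x_0$.
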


\smallskip

In the LCS setting, we have the following analogue which leads to a possible definition of LCS bundles: 

\begin{proposition}\label{complcs} Let $F \rightarrow M \xrightarrow{\pi} B$ be a fiber bundle with compact fiber such that $M$ is LCS with the Lee form $\Theta$ and the LCS form $\Omega = d_{\Theta}\eta$. If the restriction of $d_\Theta\eta$  to any fiber is an LCS form, then there exists an LCS structure $(\omega, \theta)$ on $F$ and there exist trivializations for the fiber bundle such that the transition maps preserve the conformal class of $\omega$.  
\end{proposition}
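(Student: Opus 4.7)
My plan is to transport the ambient LCS data to a single fibre by restriction, then build the trivializations using parallel transport along the Ehresmann connection whose horizontal distribution is the $\Omega$-orthogonal to the vertical tangent bundle.

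For the first part of the conclusion, fix $b_0\in B$, identify the model fibre $F$ with $F_{b_0}=\pi^{-1}(b_0)$, and let $i\colon F\hookrightarrow M$ be the inclusion. Setting $\omega:=i^{*}\Omega$ and $\theta:=i^{*}\Theta$, the fact that pullback commutes with $d$ gives $d\theta=0$ and $d\omega=i^{*}(\Theta\wedge\Omega)=\theta\wedge\omega$; nondegeneracy of $\omega$ is in the hypothesis. Hence $(\omega,\theta)$ is LCS on $F$. For the trivializations, I would define
\[
H_x:=\{u\in T_xM:\Omega(u,v)=0 \text{ for all } v\in T_xF_{\pi(x)}\};
\]
nondegeneracy of $\Omega$ along fibres makes $H$ an Ehresmann connection of rank $\dim B$. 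Compactness of $F$ ensures that horizontal lifts of vector fields on $B$ are complete, so for every smooth path $\gamma$ in $B$ the parallel transport $\varphi_\gamma$ between the endpoint fibres is well defined. I would cover $B$ by star-shaped open sets $U_\alpha$ with centres $b_\alpha$, fix once and for all a parallel transport identifying $F=F_{b_0}$ with $F_{b_\alpha}$, and define $\Phi_\alpha\colon U_\alpha\times F\to\pi^{-1}(U_\alpha)$ by parallel transport along radial segments in $U_\alpha$; the transitions over $U_\alpha\cap U_\beta$ are then compositions of such parallel transports.

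What remains is the main computation: to show that each parallel transport $\varphi_\gamma\colon F_{\gamma(0)}\to F_{\gamma(1)}$ carries the fibrewise LCS structure to one conformally equivalent to the original. For $\xi$ the horizontal lift of $\dot\gamma$, Cartan's formula reads
\[
\mathcal{L}_\xi\Omega=d(\iota_\xi\Omega)+\Theta(\xi)\,\Omega-\Theta\wedge\iota_\xi\Omega.
\]
By the very definition of $H$, the one-form $\iota_\xi\Omega$ annihilates vertical vectors, so its pullback to each fibre vanishes; restricting the identity to $F_{\gamma(t)}$ therefore collapses to $\Theta(\xi)\,\omega_{\gamma(t)}$. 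Integrating the resulting ODE $\tfrac{d}{dt}(\varphi_t^{*}\omega_{\gamma(t)})=(\Theta(\xi)\circ\varphi_t)\,\varphi_t^{*}\omega_{\gamma(t)}$ yields $\varphi_t^{*}\omega_{\gamma(t)}=e^{g_t}\omega$, with $g_t(x):=\int_0^t\Theta(\xi)(\varphi_s(x))\,ds$; the parallel calculation using $\mathcal{L}_\xi\Theta=d(\Theta(\xi))$ (since $\Theta$ is closed) gives $\varphi_t^{*}\theta_{\gamma(t)}=\theta+dg_t$. Thus $\varphi_\gamma$ is a conformal equivalence of LCS structures on $F$, and hence each transition map preserves the conformal class of $\omega$. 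I expect the main obstacle to be precisely this Cartan-formula step: it works only because $H$ was defined through $\Omega$ itself, so that the $d(\iota_\xi\Omega)$ term vanishes on fibres and the Lee-form twist $\Theta(\xi)\Omega$ absorbs the entire horizontal variation of $\omega_{\gamma(t)}$ as a pure conformal change.
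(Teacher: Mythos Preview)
Your argument is correct, and it takes a genuinely different route from the paper. The paper starts from arbitrary local trivializations $\phi_\alpha$, pushes the fibrewise LCS forms forward to obtain a smooth family $\omega_b^\alpha=d_{\tilde\theta_b^\alpha}\tilde\eta_b^\alpha$ on the model fibre, and then invokes the Bande--Kotschick LCS Moser lemma to produce isotopies $k_b^\alpha$ with $(k_b^\alpha)^*\omega_{x_0}^\alpha$ conformal to $\omega_b^\alpha$; composing with these isotopies yields the desired trivializations. This is where the hypothesis $\Omega=d_\Theta\eta$ is actually used: the Moser lemma requires the family to be $d_{\theta_t}$-exact. Your approach bypasses Moser entirely by using the ambient form $\Omega$ to single out the ``symplectic'' connection $H=(\mathrm{Vert})^{\perp_\Omega}$, and your Cartan computation showing $i_b^*\mathcal L_\xi\Omega=\Theta(\xi)\,\omega_b$ (because $i_b^*(\iota_\xi\Omega)=0$) is exactly right. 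What you gain is a more elementary proof that, as written, does not use $d_\Theta$-exactness at all --- only $d\Omega=\Theta\wedge\Omega$ and fibrewise nondegeneracy --- so you have in fact established a slightly stronger statement. What the paper's approach buys is that it works once one already has \emph{any} trivializations and the Moser lemma in hand, without having to construct a preferred connection; it also makes the Lee-class constancy along the family explicit.
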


\smallskip

The proof of \ref{compsym} rests on using the Moser trick for symplectic forms. Similarly, the proof of \ref{complcs} uses te following  LCS version of Moser trick:

\begin{proposition}\label{moserlcs}{\rm (\cite[Corollary 5.]{bk})} 
Let $\omega_t$ be a smooth family of LCS forms on a compact manifold such that the corresponding Lee forms $\theta_t$ are cohomologous. If there exists a smooth family $\eta_t$ with $\omega_t=d_{\theta_t}\eta_t$, then there exists an isotopy $\phi_t$ such that $\phi_t^*\omega_t$ is conformally equivalent to $\omega_0$ for all $t$.
\end{proposition}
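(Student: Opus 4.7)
The plan is to adapt the classical Moser trick to the LCS setting, with the conformal freedom built into the statement absorbing the extra terms that the Lee form introduces into Cartan's formula.

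First, I would normalize the Lee form so that it no longer depends on $t$. Since $[\theta_t]=[\theta_0]$ is constant in $t$, the derivative $\dot\theta_t$ is exact for each $t$; on the compact manifold $M$ Hodge theory yields a smooth family $\alpha_t$ with $d\alpha_t=\dot\theta_t$, and then $g_t:=\int_0^t\alpha_s\,ds$ is a smooth family of functions with $g_0=0$ and $\theta_t=\theta_0+dg_t$. Setting $\tilde\omega_t:=e^{-g_t}\omega_t$ and $\tilde\eta_t:=e^{-g_t}\eta_t$, a direct computation gives
$$d\tilde\omega_t=\theta_0\wedge\tilde\omega_t,\qquad \tilde\omega_t=d_{\theta_0}\tilde\eta_t,\qquad \tilde\omega_0=\omega_0.$$
Since conformal equivalence of $\phi_t^*\tilde\omega_t$ to $\omega_0$ yields conformal equivalence of $\phi_t^*\omega_t=e^{g_t\circ\phi_t}\phi_t^*\tilde\omega_t$ to $\omega_0$, we may assume from now on that $\theta_t\equiv\theta$ is independent of $t$.

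Next, I would define the Moser vector field $X_t$ by the pointwise linear equation
$$i_{X_t}\omega_t=-\dot\eta_t,$$
which has a unique smooth solution by non-degeneracy of $\omega_t$, and integrate it to an isotopy $\phi_t$ with $\phi_0=\id$ using compactness of $M$. To compute $\tfrac{d}{dt}(\phi_t^*\omega_t)$, I would apply Cartan's formula together with $d\omega_t=\theta\wedge\omega_t$:
$$L_{X_t}\omega_t=d(i_{X_t}\omega_t)+i_{X_t}(\theta\wedge\omega_t)=d_\theta(i_{X_t}\omega_t)+\theta(X_t)\,\omega_t.$$
Combining with $\dot\omega_t=d_\theta\dot\eta_t$ and the defining equation for $X_t$ gives $L_{X_t}\omega_t+\dot\omega_t=\theta(X_t)\,\omega_t$, hence
$$\frac{d}{dt}\bigl(\phi_t^*\omega_t\bigr)=h_t\,\phi_t^*\omega_t,\qquad h_t:=\theta(X_t)\circ\phi_t,$$
which integrates pointwise to $\phi_t^*\omega_t=e^{f_t}\omega_0$ with $f_t:=\int_0^t h_s\,ds$, the desired conformal equivalence.

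The conceptual heart of the argument is that the extra term $\theta(X_t)\omega_t$ produced by the LCS Cartan computation is precisely an infinitesimal conformal rescaling of $\omega_t$, which is why the classical Moser equation $i_{X_t}\omega_t=-\dot\eta_t$ suffices provided one accepts conformal rather than strict equivalence. The only genuinely delicate point is the normalization step, where compactness and the cohomological hypothesis are used to produce a smooth primitive $g_t$; after that reduction, the argument is a direct transcription of the symplectic Moser trick with $d_\theta$ replacing $d$.
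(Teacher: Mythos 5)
Your argument is correct. The paper does not actually prove this proposition---it is imported verbatim from Bande--Kotschick \cite[Corollary 5]{bk}---but your proof (reduce to a $t$-independent Lee form via a smooth primitive $g_t$ of $\theta_t-\theta_0$, solve the twisted Moser equation $i_{X_t}\omega_t=-\dot\eta_t$, and absorb the residual term $\theta(X_t)\,\omega_t$ as a conformal factor) is precisely the standard argument behind the cited result, so there is nothing to compare against in the paper itself.
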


\medskip

\noindent{\em Proof of \ref{complcs}.} Let $\phi_{\alpha}: \pi^{-1}(U_{\alpha}) \rightarrow U_\alpha \times F$ be trivializations for a covering $(U_\alpha)$ of $B$. We want to show there exist diffeomorphisms $\tilde{\phi}_{\alpha}: U_{\alpha} \times F \rightarrow U_{\alpha} \times F$ so that the transition maps of $\tilde{\phi}_{\alpha} \circ \phi_{\alpha}$ preserve the conformal class of some standard LCS form on $F$. 

To begin with, we denote henceforth the fiber over a point $b$ by $F_b$ and by $\theta_{b}$ and $\eta_{b}$ we shall understand the restriction of $\Theta$, respectively $\eta$ to the fiber $F_{b}$. For any $b$ in $U_{\alpha}$, one can show that the 1-forms $\tilde{\theta}^{\alpha}_b:=(\phi_{\alpha})_{*}(\theta_b)$ define a family of closed cohomologuous forms depending smoothly on the parameter $b$ in $U_{\alpha}$. The 1-forms $\tilde{\eta}^{\alpha}_b=(\phi_{\alpha})_{*}(\eta_b)$ also define a smooth family. So, $F$ is endowed with a smooth family of LCS forms $\omega^{\alpha}_{b}:=d_{\tilde{\theta}^{\alpha}_b}\tilde{\eta}^{\alpha}_b$. 

We now apply \ref{moserlcs}. 
Without loss of generality, we can suppose we  have chosen from the beginning a contractible covering $(U_{\alpha})$ such that the intersection of all the open sets from the covering is non-empty. Let $x_0$ be a point in this intersection. 

We iterate the application of the Moser trick in order to apply it to contractible sets. Thus, we find an isotopy $k^{\alpha}_{b}: F \rightarrow F$ with $(k^{\alpha}_{b})^*\omega^{\alpha}_{x_0}=f\omega^{\alpha}_b$, where $f$ is a smooth function on $F$. Then $(k^{\alpha}_{b})^*[\omega^{\alpha}_{x_0}]_c=[\omega^{\alpha}_b]_c$ (we denote by $[ \cdot ]_c$ the conformal class). We define the diffeomorphism $\psi_{\alpha}: U_{\alpha} \times F \rightarrow U_{\alpha} \times F$ given by $\psi_{\alpha}(b, f)=(b, k^{\alpha}_b(f))$. 

The new trivialization maps $(\tilde{\psi}_{\alpha} = \psi_{\alpha} \circ \phi_{\alpha})_{\alpha}$ have the property that $(\tilde{\psi}_{\alpha} \circ \tilde{\psi}^{-1}_{\beta|\{b\} \times F})^{*}[\omega^{\alpha}_{x_0}]_c=[\omega^{\beta}_{x_0}]_c$. Let us fix some open cover set $U_{\alpha_0}$ and define $\chi_{\alpha}: U_\alpha \times F \rightarrow U_\alpha \times F$, $\chi_\alpha (x, f)=(x, (\phi_{\alpha_0} \circ \phi^{-1}_{\alpha})_{|\{x_0\} \times F}(f))$. Then, it is easy to check that the trivialization $(\Phi_{\alpha})_{\alpha}$ defined by $\Phi_\alpha:=\tilde{\psi}_\alpha \circ \chi_{\alpha}$ satisfy $(\Phi_\alpha \circ \Phi^{-1}_\beta)^*_{|\{b\} \times F}[\omega^{\alpha_0}_{x_0}]_c=[\omega^{\alpha_0}_{x_0}]_{c}.$
Now it is clear that the LCS form we have to consider on $F$ is $\omega=\omega^{\alpha_0}_{x_0}$ and thus we constructed a trivialization, namely $\Phi_{\alpha}$, whose transition maps preserve the conformal class of $\omega$ (and implicitly de Rham class of $\theta$). 
\endproof

\smallskip

The definition that now appears to be most natural for LCS fiber bundles is the one requiring for a fiber bundle with LCS fiber the existence of a trivialization with transition  maps preserving the conformal class of the LCS form. Nevertheless, for the purpose of extending Sternberg's coupling form to the LCS setting, it will be more convenient to work with a restrictive definition of LCS bundles.

\smallskip  

Let $(F, \omega, \theta)$ be a LCS manifold. We denote by $\Diff(F, \omega, \theta)$  the group of diffeomorphisms of $F$ preserving $\omega$. Note that, as $\omega$ is non-degenerate and $d\omega=\theta\wedge\omega$, if $f\in\Diff(M)$ preserves $\omega$, it preserves $\theta$ too. 

\begin{definition} A fiber bundle $F \rightarrow M \rightarrow B$ with $(F, \omega, \theta)$ LCS is  locally conformally symplectic if there exists trivializations with transition maps in $\Diff(F, \omega, \theta)$.
\end{definition}

\begin{remark} If $F \rightarrow M \rightarrow B$ is an LCS fiber bundle, the fiber at each point comes naturally with an LCS form $\omega_{b}$ with the Lee form $\theta_{b}$. Indeed, if $\phi_{U}: \pi^{-1}(U) \rightarrow U \times F$ is a local trivialization, $(\phi_{U}|_{F_{b}})^{*} \omega$ is a well defined LCS form on $F_{b}$, whose Lee form is $(\phi_{U}|_{F_{b}})^{*}\theta$. We can define $\omega_{b}$ and $\theta_{b}$ with any other trivializing map $\phi_{V}$ with $V$ containing $b$, since the transition maps take value in $\Diff(F, \omega, \theta)$. 
\end{remark}

\begin{remark} If $(F, \omega, \theta)$ is an LCS manifold, $G$ a group acting on $F$ by diffeomorphisms from $\Diff(F, \omega, \theta)$ and $P$ a $G$ - principal bundle, $P \times_{G} F$ is an LCS bundle of fibre $F$ and base $P/G$. These are the fiber bundles which we shall mostly consider. If $G$ is a compact group, then an averaging procedure as described in \cite{ov2} will yield on $F$ a $G$-invariant LCS form, regardless of the nature of the action of $G$. Thus, restricting ourselves to bundles with LCS form - preserving structural group does not represent a severe constraint for our purpose.
\end{remark}

\begin{example}   The Hopf fibration:
\begin{equation}\label{s7}
S^1 \times S^3 \rightarrow S^1 \times S^7 \rightarrow S^4.
\end{equation}
is an example of a LCS fiber bundle.
\end{example}

Since \eqref{s7} is a prototype of the objects we are interested in, we shall provide more details in order to understand this example. For this, we have to see better what happens with the Hopf fibration $S^3 \rightarrow S^7 \rightarrow S^4$. Let us look at $S^7$ as the set $\{(a, b) \in \mathbb{H}^2 \mid |a|^2+|b|^2=1\}$, where $\mathbb{H}$ denotes the quaternionic numbers. We consider the natural map from $S^7$ to the quaternionic projective space $\mathbb{HP}^1$, taking $(a, b)$ to its equivalence class $[a, b]$. The fiber over $[a, b]$ consists of all the points on the sphere $(\lambda a, \lambda b)$, with $\lambda \in \mathbb{H}^*$ and $|\lambda|=1$. This identifies the fiber with $S^3$ and it is in fact, the Hopf fibration, via the isomorphism of $S^4$ with $\mathbb{HP}^1$. 
Take now the sets $U_1=\{[x_1, x_2] \in \mathbb{HP}^1 \mid x_1 \neq 0\}$ and $U_2=\{[x_1, x_2] \in \mathbb{HP}^1 \mid x_2 \neq 0\}$. They represent the trivializations sets of this fiber bundle and the transition maps are given by:
$$U_1 \cap U_2 \times S^3 \rightarrow U_1 \cap U_2 \times S^3$$
$$([1, \lambda], s) \mapsto ([1, \lambda], s \cdot \tfrac{\lambda}{|\lambda|})$$
This is just the multiplication with an element from $S^3$ and it preserves the standard contact form of $S^3$, which we shall denote by $\alpha$. We consider now the trivial product with $S^1$ to obtain \eqref{s7}. Obviously, the transition maps let the volume form $\nu$ of the circle intact. Consequently, they preserve the form $d_{\nu}\alpha$, which represents the LCS form of $S^1 \times S^3$.

Moreover, the LCS form on $S^1 \times S^7$, given by $d_{\nu}\tilde{\alpha}$, where $\tilde{\alpha}$ is the standard contact form of $S^7$, restricts on the fibers to the LCS form of $S^1 \times S^3$, since $\tilde{\alpha}_{|S^3} = \alpha$. So this fiber bundle has the property  we desire and try to depict in the paper, namely an LCS bundle with an LCS form on the total space which restricts {\em{well}} on the fibers.

\definition Let $F \rightarrow M \rightarrow B$ be an LCS bundle. We say that $(\Omega, \Theta)$ on $M$ is an {\em extension} of $(\omega, \theta)$ if  $i_b^*\Omega=\omega_b$ and $i_{b}^*\Theta=\theta_b$ for any $b$ in $B$. 

\smallskip

A simple observation is that if $\Omega$ is an LCS form on the total space of a fibre bundle that restricts to the LCS forms of the fibres, then also its Lee form $\Theta$ will restrict to the Lee forms of the fibres, since the LCS form uniquely determines  the Lee form.

\section{Twisted closed extensions}

In the symplectic setting, the first step towards finding a symplectic form on the total space was looking for a closed two-form which restricts well on the fibers. The following result is well known (we cite it according to \cite{glsw}):

\begin{theorem} {\rm (Thurston)}\label{thurston} Let $F \rightarrow M \rightarrow B$ be a differentiable fibre bundle carrying a field $\omega$ of $p$-forms on the vertical bundle, defining a closed form on each fibre. Then there exists a closed $p$-form $\Omega$ on $M$ which restricted on the fibers is equal to $\omega$ if and only if there exists a de Rham cohomology class $c$ on $M$ such that its restriction to each fibre coincides with the class determined by $\omega$.
\end{theorem}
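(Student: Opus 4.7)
The forward direction is immediate: if a closed $\Omega$ with $i_b^*\Omega=\omega_b$ exists, take $c:=[\Omega]\in H^p(M;\mathbb{R})$; then $i_b^*c$ is represented by $\omega_b$ for every $b$.

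For the converse, the plan is to begin with an arbitrary closed representative $\eta\in c$ and correct it by an exact form so that the result restricts correctly on each fibre. Set $\sigma_b:=i_b^*\eta-\omega_b$; by hypothesis $[\sigma_b]=i_b^*c-[\omega_b]=0$, so each $\sigma_b$ is exact on $F_b$. If I can produce a $(p-1)$-form $\beta$ on $M$ whose fibre restriction $i_b^*\beta$ is a primitive of $\sigma_b$ for every $b$, then $\Omega:=\eta-d\beta$ is closed and, using the commutation $i_b^*\circ d=d\circ i_b^*$, satisfies $i_b^*\Omega=\omega_b$.

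The construction of such a $\beta$ is the technical heart of the argument. Choose a good cover $\{U_\alpha\}$ of $B$ with local trivialisations $\phi_\alpha\colon\pi^{-1}(U_\alpha)\cong U_\alpha\times F$. Over each contractible $U_\alpha$, the family $\{\sigma_b\}_{b\in U_\alpha}$ of exact closed $p$-forms on $F$ admits a smooth family of local primitives $\beta_b^\alpha$---via the Poincar\'e lemma with parameters, or equivalently via fibrewise Hodge theory ($\beta_b^\alpha:=d^*G\sigma_b$) when $F$ is compact. Extend this family vertically through $\phi_\alpha$ to a $(p-1)$-form $\beta^\alpha$ on $\pi^{-1}(U_\alpha)$ whose fibre restriction at $b$ is $\beta_b^\alpha$. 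Take a partition of unity $\{\chi_\alpha\}$ on $B$ subordinate to $\{U_\alpha\}$ and set
\[
\beta := \sum_\alpha (\chi_\alpha\circ\pi)\,\beta^\alpha,
\]
extended by zero outside $\pi^{-1}(U_\alpha)$. Since $d(\chi_\alpha\circ\pi)=\pi^*d\chi_\alpha$ is pulled back from $B$, it vanishes under every $i_b^*$, so
\[
i_b^*d\beta=\sum_\alpha\chi_\alpha(b)\,d\beta_b^\alpha = \Bigl(\sum_\alpha\chi_\alpha(b)\Bigr)\sigma_b = \sigma_b,
\]
as required.

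The main obstacle I foresee is the local step: producing a smoothly $b$-dependent primitive $\beta_b^\alpha$ on $F$ over each $U_\alpha$. The contractibility of the open sets in the good cover makes this tractable via the chain-homotopy operator of the deformation retract of $U_\alpha$ to a point. Once this local construction is in place, the partition-of-unity assembly is painless precisely because horizontal one-forms annihilate fibre restrictions, so only the vertical derivative of each $\beta^\alpha$ survives the pullback $i_b^*$.
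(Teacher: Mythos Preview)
Your argument is correct. Note, however, that the paper does not actually prove this theorem: it is quoted as a known result of Thurston, cited from \cite{glsw}. What the paper \emph{does} prove is the LCS analogue, \ref{ext}, and your proof of Thurston's theorem follows exactly the same template the paper uses there: pick a representative of the cohomology class, observe that its fibrewise discrepancy with $\omega_b$ is exact, produce a smooth family of primitives over each trivialising chart (the paper does this via the Green operator $d_\theta^*G_\theta$, which is precisely your suggested $d^*G$ in the untwisted case), and glue with a partition of unity pulled back from the base. So your proposal is both correct and in complete agreement with the method the paper employs for the twisted version.
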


\smallskip

We now find a similar criterion for LCS forms. 

As in the symplectic case, \cite{glsw}, we could directly apply Proposition 2.13 in \cite{hattori}, which is very general,  but has a very involved proof. For the sake of completeness, we shall give a direct and short proof using Hodge theory instruments, which we believe is interesting in itself.

\begin{theorem}\label{ext} Let $F \rightarrow M \rightarrow B$ be an LCS fiber bundle with $F$ compact and let $\Theta$ be a one-form such that $\Theta_{|F_b}=\theta_b$. Then there exists a $d_{\Theta}$-closed form $\Omega$ on $M$ with $(\Omega, \Theta)$ an extension of $(\omega, \theta)$ if and only if there exists a class $c$ in the twisted cohomology $H^2_{\Theta}(M)$ such that for any $b$, $i_{b}^*c=[\omega_b]$ in $H^{2}_{\theta_{b}}(F_b)$, where $i_{b}: F_b \rightarrow M$ is the natural inclusion of the fibre.
\end{theorem}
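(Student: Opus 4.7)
The plan is to follow the template of Thurston's theorem \ref{thurston}, replacing de Rham cohomology by the Morse--Novikov cohomology of $d_\Theta$. The ``only if'' direction is immediate: if $(\Omega,\Theta)$ is an extension with $d_\Theta\Omega=0$, then $c:=[\Omega]_\Theta\in H^2_\Theta(M)$ restricts fiberwise to $[\omega_b]\in H^2_{\theta_b}(F_b)$. For the converse, fix a $d_\Theta$-closed representative $\alpha\in\Omega^2(M)$ of $c$; the idea is to correct $\alpha$ by a $d_\Theta$-exact form on $M$ whose fiberwise restriction equals the defect $\omega_b-i_b^*\alpha$. Since by hypothesis the defect is $d_{\theta_b}$-exact on the compact fiber $F_b$ for every $b$, the problem reduces to constructing a smooth family of primitives $\gamma_b\in\Omega^1(F_b)$ satisfying $d_{\theta_b}\gamma_b=\omega_b-i_b^*\alpha$, and then extending them to a single 1-form $\beta$ on $M$.

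The family of primitives is produced via twisted Hodge theory. Choose a smoothly varying family of Riemannian metrics on the fibers, and let $d^*_{\theta_b}$, $\Delta_{\theta_b}=d_{\theta_b}d^*_{\theta_b}+d^*_{\theta_b}d_{\theta_b}$ and $G_b$ denote the associated formal adjoint, twisted Laplacian and Green operator. Define
\begin{equation*}
\gamma_b:=d^*_{\theta_b}G_b(\omega_b-i_b^*\alpha).
\end{equation*}
Using $[d_{\theta_b},G_b]=0$ and $d_{\theta_b}(\omega_b-i_b^*\alpha)=0$ one obtains $d_{\theta_b}\gamma_b=\Delta_{\theta_b}G_b(\omega_b-i_b^*\alpha)=\omega_b-i_b^*\alpha$, the last equality because $d_{\theta_b}$-exact forms are orthogonal to the harmonic ones and hence annihilated by the projection $I-\Delta_{\theta_b}G_b$. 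Now fix an Ehresmann connection on $F\to M\to B$ and extend the vertical 1-form $(\gamma_b)_b$ to $\beta\in\Omega^1(M)$ by declaring it to vanish on the horizontal distribution; then $i_b^*\beta=\gamma_b$, whence
\begin{equation*}
i_b^*(d_\Theta\beta)=d(i_b^*\beta)-(i_b^*\Theta)\wedge i_b^*\beta=d_{\theta_b}\gamma_b=\omega_b-i_b^*\alpha.
\end{equation*}
Consequently $\Omega:=\alpha+d_\Theta\beta$ is $d_\Theta$-closed and extends $(\omega,\theta)$ fiberwise, as required.

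The main technical delicacy is the smooth dependence of $G_b$ on the parameter $b$, which can fail a priori if $\dim\ker\Delta_{\theta_b}$ jumps. However, since only the values of $G_b$ on the $d_{\theta_b}$-exact form $\omega_b-i_b^*\alpha$ enter the construction, it suffices to know that $G_b$ varies smoothly on the orthogonal complement of the harmonic forms, which is a standard consequence of parameterized elliptic theory on compact fibers. If one prefers to bypass this Hodge-theoretic point altogether, the same statement can be established by trivializing $B$ locally, producing $\beta$ on each chart with the above argument on a fixed model fiber, and patching with a partition of unity on $B$; this is essentially the strategy of Hattori's Proposition 2.13 referenced above.
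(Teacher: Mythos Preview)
Your proof is correct and follows essentially the same route as the paper: both arguments take a $d_\Theta$-closed representative of $c$, use the twisted Green operator formula $\gamma=d_\theta^{*}G_\theta(\cdot)$ to produce smoothly varying $d_{\theta_b}$-primitives of the fiberwise defect, and then correct by a global $d_\Theta$-exact term. The only organizational difference is that the paper first trivializes over charts of $B$ and runs the Hodge argument on a single model fiber with \emph{fixed} $(g,\theta)$, which cleanly avoids the parameter-dependence issue you flag, and then glues with a partition of unity on $B$; this is exactly the alternative you sketch in your final paragraph, so the two proofs coincide once that route is taken.
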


\smallskip

{The next lemma is crucial for the proof of \ref{ext}:}

\begin{lemma} Let $F$ be a compact manifold and $\theta$ a closed 1-form. If $\{\omega_{t}\}_{t}$ is a smooth family of $d_{\theta}$-exact forms indexed by an open set in $\mathbb{R}^n$, then there exists a smooth family of forms $\{\psi_t\}_{t\in\RR}$ such that $\omega_{t}=d_{\theta}\psi_t$ for any $t$.
\end{lemma}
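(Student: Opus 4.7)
\medskip

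\noindent\textbf{Proof plan.} The strategy is classical Hodge theory for the twisted differential $d_\theta$: we construct a canonical primitive $\psi_t$ as the image of $\omega_t$ under a fixed (i.e.\ $t$-independent) linear operator that inverts $d_\theta$ on its image.

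First, fix an arbitrary Riemannian metric on the compact manifold $F$ and let $d_\theta^*$ denote the formal adjoint of $d_\theta$. Since $d_\theta = d - \theta \wedge \cdot$ is a zeroth-order perturbation of $d$, the twisted Laplacian
\[
\Delta_\theta := d_\theta d_\theta^* + d_\theta^* d_\theta
\]
is an elliptic, self-adjoint, second-order differential operator on $F$. By the standard Hodge theory for such operators on a compact manifold, its kernel $\mathcal H^\bullet_\theta$ is finite-dimensional (and naturally isomorphic to the Morse--Novikov cohomology $H^\bullet_\theta(F)$), there is an $L^2$-orthogonal decomposition $\Omega^\bullet(F) = \mathcal H^\bullet_\theta \oplus \operatorname{im} d_\theta \oplus \operatorname{im} d_\theta^*$, and there is a Green operator $G_\theta$, continuous on $C^\infty$-forms, satisfying $\Delta_\theta G_\theta = G_\theta \Delta_\theta = \mathrm{Id} - H$, where $H$ is the $L^2$-projector onto $\mathcal H^\bullet_\theta$. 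Moreover $G_\theta$ commutes with $d_\theta$ and $d_\theta^*$ (this is the standard argument: both operators commute with $\Delta_\theta$, hence with its inverse on the orthogonal complement of harmonics).

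Second, I define the candidate primitive by the fixed formula
\[
\psi_t := d_\theta^* G_\theta \, \omega_t.
\]
Since $\omega_t$ is $d_\theta$-exact, $d_\theta \omega_t = 0$, and its class in $H^2_\theta(F)$ vanishes, so $H\omega_t = 0$. Then
\[
d_\theta \psi_t = d_\theta d_\theta^* G_\theta \omega_t = \Delta_\theta G_\theta \omega_t - d_\theta^* d_\theta G_\theta \omega_t = \omega_t - d_\theta^* G_\theta d_\theta \omega_t = \omega_t,
\]
using $\Delta_\theta G_\theta \omega_t = \omega_t - H\omega_t = \omega_t$ and the commutation of $G_\theta$ with $d_\theta$.

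Third, for the smoothness in $t$: the operator $d_\theta^* G_\theta$ is a continuous (in fact, pseudodifferential of order $-1$) $\mathbb R$-linear map $C^\infty(F,\Lambda^2) \to C^\infty(F,\Lambda^1)$ which does not depend on $t$ and acts only in the $F$-variables. Since $\{\omega_t\}$ is smooth jointly in $t$ and the point of $F$, applying the fixed operator $d_\theta^* G_\theta$ pointwise in $t$ preserves joint smoothness (one can differentiate under the operator in $t$ by standard elliptic regularity: $\partial_t \psi_t = d_\theta^* G_\theta \partial_t \omega_t$, iteratively).

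The only real subtlety --- which I view as the ``main obstacle'' and is nonetheless standard --- is justifying that $d_\theta^* G_\theta$ indeed defines a continuous operator on smooth families with smoothness in the parameter, i.e.\ that elliptic Hodge theory for $d_\theta$ works exactly as for $d$. This is granted by the fact that $\Delta_\theta$ differs from the ordinary Hodge Laplacian only by lower-order terms involving $\theta$, so classical elliptic regularity and the construction of the Green operator carry over verbatim.
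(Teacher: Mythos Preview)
Your proof is correct and follows essentially the same approach as the paper: both fix a Riemannian metric, invoke Hodge theory for the elliptic twisted Laplacian $\Delta_\theta$, and take $\psi_t := d_\theta^* G_\theta \omega_t$ as the canonical smooth primitive, using the commutation of $G_\theta$ with $d_\theta$ and $d_\theta^*$. The paper additionally establishes uniqueness of the $d_\theta^*$-exact primitive, but this is not needed for the lemma and your direct computation $d_\theta d_\theta^* G_\theta \omega_t = \Delta_\theta G_\theta \omega_t - d_\theta^* G_\theta d_\theta \omega_t = \omega_t$ is slightly cleaner.
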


\begin{proof} 
We choose a Riemannian metric g  and denote by $d_{\theta}^*$ the adjoint of $d_{\theta}$ with respect to $g$. The Laplacian associated to $d_{\theta}$ is $\Delta_{\theta}=d_{\theta}d_{\theta}^*+ d_{\theta}^{*}d_{\theta}$ and it is an elliptic operator. The following Hodge decomposition for the twisted differential holds: 
\begin{equation}\label{hodgez}
\Omega^{p}(F)=\mathcal{H}^{p}_{\theta}(F) \oplus d_{\theta}\Omega^{p-1}\oplus d_{\theta}^{*}\Omega^{p+1}
\end{equation}
where $\mathcal{H}^{p}_{\theta}=\mathrm{Ker}\Delta_{\theta}$. We observe that each $d_{\theta}$-exact form belongs to $d_{\theta}d_{\theta}^*\Omega^{p}$ by plugging in \eqref{hodgez} the Hodge decomposition for $\Omega^{p-1}$. So each $d_{\theta}$-exact form has a  primitive which is $d_{\theta}^*$-exact, hence the following map is surjective:
$$d_{\theta}: d_{\theta}^{*}\Omega^{p} \rightarrow d_{\theta}\Omega^{p-1}.$$
 Moreover, we prove that this primitive is unique, resulting in the bijectivity of the map above. Indeed, let us assume that $d_{\theta}d_{\theta}^*\eta_1=d_{\theta}d_{\theta}^*\eta_2$, then 
$$\langle d_{\theta}^*\eta_1, d_{\theta}^*\psi\rangle=\langle d_{\theta}^*\eta_2, d_{\theta}^*\psi\rangle.$$
But because of the Hodge decomposition this implies that $$\langle d_{\theta}^*\eta_1, \eta\rangle=\langle d_{\theta}^*\eta_2, \eta\rangle$$ for any $\eta$, hence $d_{\theta}^*\eta_1=d_{\theta}^*\eta_2$. 

We denote by $G_{\theta}:  (\mathcal{H}^{p}_{\theta}(F))^{\perp}\rightarrow (\mathcal{H}^{p}_{\theta}(F))^{\perp}$ the Green operator associating to $\alpha$ the unique solution $\phi$ of $\Delta_{\theta} \phi = \alpha$. By using the commutation of $G_{\theta}$ with $d_{\theta}$ and $d_{\theta}^*$ (see Proposition 6. 10 in \cite{fw}), one can easily show that if $\omega=d_{\theta}\eta$, then $\omega=d_{\theta}d_{\theta}^{*}G_{\theta} \omega$ (note that $\omega$ belongs to $ (\mathcal{H}^{p}_{\theta}(F))^{\perp}$). Therefore, the primitive of a $d_{\theta}$-exact form $\omega$ can be chosen to be $d_{\theta}^{*}G_{\theta}\omega$. Since $d_{\theta}^*$ and $G_{\theta}$ are smooth operators, the family of primitives $\psi_t=d_{\theta}^{*}G_{\theta}\omega_t$ is smooth.
\end{proof}

\smallskip

{We now proceed  with  the:

\medskip

\noindent{\em Proof of  \ref{ext}.}
The direct implication is obvious. For the converse, consider $c$ in $H^2_{\Theta}(M)$ with $i_{b}^*c=[\omega_b]$ in $H^2_{\theta_b}(F)$ and fix $\Omega_0$ in $c$. Then there exists a one-form $\psi_b$ on $F_b$ such that $i_b^*\Omega_0 - \omega_b=d_{\theta_b}\psi_b$. We now restrict only to those $b$ belonging to a trivialization open set $U$ and consider $\Phi_U : \pi^{-1}(U) \rightarrow U \times F$ a trivialization map. Since the map $\Phi_U$ defines an isomorphism between $F_b$ and $F$ for any $b$ in $U$, $i_b^*c - \omega_b$ is a smooth family of vertical forms and $\Phi_{U*}\theta_b=\theta$,  $\Phi_{U*}d_{\theta_b}\psi_b$ defines a smooth family of $d_{\theta}$-exact forms on F. 
By using the previous lemma, we get that the collection $\psi_b$ can be chosen smooth, so there exists $\psi_U$  a one-form on $\pi^{-1}(U)$ such that $i_b^*\psi_{U}=\psi_b$. We define the following two-form:
$$\Omega = \Omega_0 - d_{\Theta} \sum_{U} (\pi^* \rho_U) \psi_U$$ where $\{\rho_U\}_U$ is a partition of unity on $B$.
By restricting $\Omega$ to $F_b$ we get that:
\begin{align*}
i_{b}^*\Omega &=i_b^{*}\Omega_0-d_{\theta_b}\sum_U(\rho_U(b))\psi_b\\
                              &=i_b^{*}\Omega_0-d_{\theta_b}\psi_b\\
                              &=\omega_b.
\end{align*}
This completes the proof.
\endproof

\begin{corollary} Let $(F, \omega, \theta)$ be a compact locally conformally symplectic manifold and let $\omega=d_{\theta}\eta$. If $F \rightarrow M \rightarrow B$ is an LCS bundle and there is a cohomology class $c$ in $H^{1}_{dR}(M)$ such that for any $b$ in $B$, $i_{b}^*c=[\theta_b]$, one can find a twisted closed extension for $(\omega, \theta)$. 
\end{corollary}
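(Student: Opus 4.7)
The plan is to reduce the statement to \ref{ext} by constructing a closed $1$-form $\Theta$ on $M$ that restricts to $\theta_b$ on each fiber, and then observing that the relevant twisted cohomology obstruction for $\omega$ vanishes automatically.

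First, I would pick a closed representative $\Theta_0\in c\subset H^1_{dR}(M)$. By hypothesis $i_b^*\Theta_0-\theta_b$ is $d$-exact on the compact manifold $F_b$, so it equals $df_b$ for some function $f_b$. To obtain a smooth $b$-dependence, I would work inside a trivialization $\Phi_U:\pi^{-1}(U)\to U\times F$ and apply the preceding Hodge-theoretic lemma in the untwisted case: setting
$$f_b:=d^{*}G\bigl((\Phi_U)_{*}(i_b^*\Theta_0-\theta_b)\bigr),$$
where $G$ is the Green operator of the ordinary Laplacian on $F$, yields a smooth family, hence a function $f_U\in\mathcal{C}^{\infty}(\pi^{-1}(U))$ with $i_b^*f_U=f_b$. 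For a partition of unity $\{\rho_U\}$ subordinate to a trivializing cover of $B$, the form
$$\Theta:=\Theta_0-d\sum_U(\pi^*\rho_U)\,f_U$$
is closed and satisfies $i_b^*\Theta=i_b^*\Theta_0-d\bigl(\sum_U\rho_U(b)f_b\bigr)=i_b^*\Theta_0-df_b=\theta_b$, as required.

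Next, I would exploit $\omega=d_\theta\eta$ together with the fact that the transition maps of the LCS bundle belong to $\Diff(F,\omega,\theta)$: every fiber $(F_b,\omega_b,\theta_b)$ is LCS-isomorphic to $(F,\omega,\theta)$, so $\omega_b$ is $d_{\theta_b}$-exact, and in particular $[\omega_b]=0$ in $H^2_{\theta_b}(F_b)$. Consequently, the zero class $0\in H^2_\Theta(M)$ trivially satisfies $i_b^*0=[\omega_b]$, and \ref{ext} applied to $\Theta$ and this class produces a $d_\Theta$-closed $2$-form $\Omega$ on $M$ with $(\Omega,\Theta)$ an extension of $(\omega,\theta)$.

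The only delicate point is arranging that $\Theta$ restricts to $\theta_b$ \emph{exactly} rather than merely cohomologously; this is precisely the content, in the untwisted case, of the smooth-primitive lemma proved above, combined with a standard partition of unity patching. Once that is in hand the corollary is immediate, since the $d_\theta$-exactness of $\omega$ on the model fiber trivializes the Morse--Novikov obstruction fiberwise.
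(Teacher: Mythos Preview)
Your argument is correct and follows the same two-step strategy as the paper: first produce a closed $\Theta$ on $M$ with $i_b^*\Theta=\theta_b$, then invoke \ref{ext} with the zero class in $H^2_\Theta(M)$, which trivially restricts to $[\omega_b]=0$ since $\omega=d_\theta\eta$ forces each $\omega_b$ to be $d_{\theta_b}$-exact. The only difference is that the paper obtains $\Theta$ by a direct appeal to Thurston's result \ref{thurston}, whereas you reprove that special case by hand via the untwisted Hodge--Green construction and a partition of unity---the very same mechanism the paper uses inside the proof of \ref{ext}; this makes your write-up more self-contained but is not a genuinely different route.
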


\begin{proof} The proof is obvious since once found a 1-form $\Theta$ on $M$ extending $\theta$ on the fibers, the condition in  \ref{ext} applied to $\Theta$ is automatically satisfied, because the class of $\omega_{b}$ in $H^{2}_{\theta_b}(F_{b})$ is 0 and is trivially extended by the class 0 in $H^{2}_{\Theta}(M)$. But such a $\Theta$ exists by applying  \ref{thurston}.
\end{proof}

\begin{proposition}\label{clara} Let $(F, \omega, \theta)$ be a compact LCS manifold with $\omega=d_{\theta}\eta$, $G$ a Lie group acting on $F$ by preserving $\omega$ and $P$ a principal $G$-bundle. If $\tilde{\rho}: g \rightarrow \mathcal{X}(F)$ is the infinitesimal action induced by the action of $G$ on $F$ on the Lie algebra $g$ and $\theta(\tilde{\rho}(v))=0$ for any $v$ in $g$, then $P \times_{G}F$ carries a twisted closed extension.
\end{proposition}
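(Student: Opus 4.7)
The plan is to invoke the previous corollary by constructing on $M := P \times_G F$ an explicit closed $1$-form $\Theta$ whose restriction to each fibre $F_b$ is the Lee form $\theta_b$. Once such a $\Theta$ is in hand, its de Rham class $c := [\Theta]$ automatically satisfies $i_b^*c = [\theta_b]$ for every $b \in P/G$, and the corollary immediately yields a twisted closed extension of $(\omega,\theta)$.

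To build $\Theta$, I would work upstairs on $P \times F$ with the projections $\pi_P$ and $\pi_F$, and descend the closed $1$-form $\pi_F^*\theta$ through the quotient map $q\colon P \times F \to M$ associated with the free diagonal $G$-action. Two standard checks are needed. First, $\pi_F^*\theta$ is $G$-invariant: since $\omega$ is $G$-invariant and a non-degenerate $2$-form uniquely determines its Lee form via $d\omega = \theta\wedge\omega$, the form $\theta$ is itself $G$-invariant, whence $\pi_F^*\theta$ is invariant by $G$-equivariance of $\pi_F$. Second, $\pi_F^*\theta$ must annihilate all vertical vectors of $q$: a fundamental vector field of the diagonal action on $P\times F$ has $F$-component equal to $\pm\tilde\rho(v)$, $v \in \g$, so its contraction with $\pi_F^*\theta$ equals $\pm\theta(\tilde\rho(v))$, which vanishes by hypothesis. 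Thus $\pi_F^*\theta$ is $G$-basic and descends to a $1$-form $\Theta$ on $M$ with $q^*\Theta = \pi_F^*\theta$; since $q$ is a surjective submersion, $q^*$ is injective on forms, so $\Theta$ is closed.

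It remains to verify that $\Theta$ restricts to $\theta_b$ on every fibre. Fix $b \in P/G$ and a point $p$ above $b$; the inclusion $\iota_p\colon F \hookrightarrow P \times F$, $f \mapsto (p,f)$, composed with $q$ yields the standard identification $F \cong F_b$ through which $\theta_b$ is defined, while $\pi_F \circ \iota_p = \id_F$. Pulling back the identity $q^*\Theta = \pi_F^*\theta$ along $\iota_p$ therefore recovers $\theta$, i.e.\ $i_b^*\Theta = \theta_b$. The only non-formal point in the whole argument is the horizontality check in the descent step, and that is a direct restatement of the hypothesis $\theta(\tilde\rho(v))=0$; everything else is formal.
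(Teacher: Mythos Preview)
Your proof is correct and follows essentially the same approach as the paper: both arguments descend the pullback $\pi_F^*\theta$ (written as $(0,\theta)$ in the paper) from $P\times F$ to $P\times_G F$ by checking $G$-invariance and horizontality, then invoke the preceding corollary. Your write-up is in fact slightly more detailed in justifying $G$-invariance of $\theta$ and in verifying the fibrewise restriction.
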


\begin{proof} According to the previous result, we have to find a one-form $\Theta$ such that $i_{b}^*\Theta=\theta_{b}$. 

We consider the one-form $(0, \theta)$ on $P \times F$. It is a closed $G$-invariant form. In order for $(0, \theta)$ to descend to a one-form on $P \times_{G} F$, the following relation has to be satisfied:
$$i_{\rho(v)}(0, \theta)=0,\quad \forall v \in \g,$$
where $\rho : \g \rightarrow \mathcal{X}(P \times F)$ is the infinitesimal action of the Lie algebra $\g$ on $P \times F$ with respect to the action of $G$ on $P \times F$ (given by $(p,f) \cdot g = (pg, g^{-1}f)$).

If we denote by $\rho': \g \rightarrow \mathcal{X}(P)$ the infinitesimal action of $\g$ on $P$, then $\rho=\rho'-\tilde\rho$. Hence, 
$i_{\rho(v)}(0, \theta)=-\theta(\tilde\rho(v))$ and the condition for $(0, \theta)$ to be basic becomes equivalent to the vanishing of $\theta(\tilde\rho(v))$. We consider then $\Theta$ on $P \times_{G}F$ such that $\pi^{*}\Theta=(0, \theta)$. Clearly, $i_{b}^*\Theta=\theta_{b}$ and the conclusion follows.
\end{proof}

\begin{remark} We shall see in the next section that two possible scenarios for $\theta(\tilde{\rho}(v))$ to vanish are when the action of $G$ is twisted Hamiltonian or when $G$ has at least one fixed point. 
\end{remark}

\section{Proof of  \ref{main}} 

The proof of the theorem follows the very same original arguments, meaning that we shall construct a non-degenerate two-form on the total space $P \times_G F$ by using a connection on $P$ and the momentum map of the twisted Hamiltonian action, as previously defined. However, this form must not be symplectic, but locally conformally symplectic. Therefore, the main difference from the original coupling form construction is that of finding a natural way to define a Lee form on $P \times_{G} F$ coinciding with the Lee forms of the fibres. For the sake of completeness, we shall give all the steps of the proof and compare to the results from the original setting (described for instance in \cite{ms}).
\smallskip

We need the following:

\begin{definition} A connection $\Gamma$ in an LCS fiber bundle is LCS if the parallel transport with respect to any curve $\gamma$, $P_{\gamma}: F_{\gamma_{0}} \rightarrow F_{\gamma_1}$, preserves the LCS forms of the fibers, meaning $P_{\gamma}^{*}\omega_{\gamma_1}=\omega_{\gamma_0}$ (which automatically implies $P_{\gamma}^{*}\theta_{\gamma_1}=\theta_{\gamma_0}$).
\end{definition}  

\begin{remark} Let $H$ be any connection on $P$. Then $H$ induces a connection $\mathcal{H}$ on $P \times_{G} F$ by the following:
$$\mathcal{H}_{[p,f]}=\{\pi_{*, (p,f)}v\,|\, v \in H_{p}\},$$
where $[p, f]$ denotes the image of $(p, f)$ under the projection $\pi : P \times F \rightarrow P \times_{G} F$. Moreover, $\mathcal{H}$ is an LCS connection and consequently the flow of every horizontal field on $P \times_{G} F$ with respect to $\mathcal{H}$ preserves the LCS form and the Lee form when restricted to the fibers. 
\end{remark}

\smallskip

The proof will be divided into six steps which we now explain:

\smallskip

\noindent{\bf{Step 1.} (Fixing a connection.)}
We denote by $B$ the quotient $P/G$ and by $\Ver$ the space of vertical fields of the fibration $F \rightarrow P \times_{G} F \rightarrow B$.  Let $\tilde\rho: g \rightarrow \mathcal{X}(F)$ be the infinitesimal action of $G$ on $F$. 

We consider on $P \times_G F$ a connection $\mathcal{H}$ induced by a fat connection $H$ on $P$ (we need fatness only at $\mu(F)$). The curvature of $\mathcal{H}$ is given by: $$R_{\mathcal{H}}: TB \times TB \rightarrow \Ver$$
$$R_{\mathcal{H}}(X, Y)=[X^*, Y^*]-[X, Y]^*,$$
where $X^*$ denotes the lift of $X$ with respect $\mathcal{H}$. We shall denote by $A$ the curvature form of the $\g$ - valued one-form $a$ associated to $H$. The relation between $A$ and $R_\mathcal{H}$ is: 
$$R_{\mathcal{H}}([p, f])(X, Y)=\pi_{*, (p, f)}(0, \tilde{\rho}(A_p(X, Y))(f)).$$

\remark As in \cite{v}, a vector field $X$ is twisted Hamiltonian if $i_{X}\omega$ is $d_{\theta}$ - exact. We denote the space of twisted Hamiltonian vector fields by $\Ham(F)$. Therefore, $\tilde{\rho}$ takes value in $\mathrm{Ham}(F)$ and the relation that connects the two curvatures tells us that $R_{\mathcal{H}}$ is a Hamiltonian vector field if restricted to a fiber $F_b$, meaning $R_{\mathcal{H}}: T_bB \times T_bB \rightarrow \mathrm{Ham}(F_b)$.

\smallskip

\noindent{\bf{Step 2.} ({Defining the Lee form.})} 
As in  \ref{clara}, the form $(0, \theta)$ on $P \times F$ descends to $P \times_{G}F$ if and only if $\theta(\tilde{\rho}(v))=0$ for any element $v$ in the Lie algebra $\g$. But this will be the case as a consequence of the twisted Hamiltonian action of $G$.
Indeed, $\tilde\rho(v)$ is a twisted Hamiltonian vector field on $F$, therefore there exists a smooth function $f$ on $F$ such that $i_{\tilde\rho(v)}\omega=d_{\theta}f$. By the Cartan formula we obtain that:
\begin{align*}
\mathcal{L}_{\tilde\rho(v)}\omega(X, Y)  =& i_{\tilde{\rho}(v)}d\omega(X, Y)+di_{\tilde\rho(v)}\omega(X, Y)\\
                              =&i_{\tilde\rho(v)}\theta \wedge \omega (X, Y) + d(d_{\theta}f)(X, Y)\\
                              =&\theta(\tilde\rho(v))\omega (X, Y) - \theta(X)\omega(\tilde\rho(v), Y)\\
                                 &+\theta(Y)\omega(\tilde\rho(v), X)+d(d_{\theta}f)(X, Y)\\
                              =&\theta(\tilde\rho(v))\omega (X, Y)-\theta \wedge d_\theta f (X, Y) + d(d_{\theta}f)(X, Y)\\
                              =&\theta(\tilde\rho(v))\omega(X, Y).
\end{align*}

However, $\mathcal{L}_{\tilde\rho(v)}\omega=0$, since the action of $G$ preserves $\omega$. The nondegeneracy of $\omega$ then implies  that $\theta(\tilde\rho(v))=0$, thus resulting in the condition of $(0, \theta)$ to be basic. 

Therefore, there exists a one-form $\Theta$ on $P \times_{G} F$ such that $(0, \theta)=\pi^{*}\Theta$. Moreover, $\Theta$ is  closed  and equals $\theta_b$ when restricted to $F_b$. The one-form $\Theta$ has also the property that $\Theta(\pi_{*}(w, 0))=0$, for any vector $w$ tangent to $P$. Thus, $(0, \theta)$ yields a closed one-form $\Theta$ on $P \times_{G} F$ which is $\theta_b$ on $F_b$ and vanishes on any horizontal space induced by a connection on $P$. 

$\Theta$ will be the Lee form of the LCS form we construct in the next steps.

\smallskip

\noindent{\bf{Step 3.} ({Defining the LCS form.}})} 
The LCS form is defined by intertwining the LCS structure of the fiber and the chosen connection on the principal bundle, being called \noindent{\em{coupling form}} in the original setting. We  use the same terminology and the definition is the following:

$$\omega_{[p, f]}(X, Y) = \begin{cases} \omega_{b}(X, Y) &\mbox{if } X \text{ and } Y \text{are vertical},\\
0 & \mbox{if } X \text{ is horizontal and }Y \text{ is vertical}, \\
\mu(f)(A_p(X', Y')) &\mbox{if } X \text{ and }Y \text{ horizontal}. \end{cases}$$

Here $X'$ and $Y'$ are the vectors in $T_{p}P$ such that $\pi_{*,(p,f)}(X')=X$ and $\pi_{*, (p,f)}(Y')=Y$.

\begin{remark} \label{rem1} Since $F$ is not a globally conformally symplectic manifold, $\theta$ is not an exact form and $H^0_{\theta}(F, \mathbb{R})$ vanishes (see \cite[Lemma 3.1]{hr1}). But $H^0_{\theta}(F, \mathbb{R})$ is precisely the obstruction for every Hamiltonian vector field to have a unique Hamiltonian associated function, as the following is an exact sequence:
$$0 \longrightarrow H^0_{\theta}(F, \mathbb{R}) \longrightarrow \mathcal{C}^{\infty}(F, \mathbb{R}) \longrightarrow \mathrm{Ham}(F) \longrightarrow 0.$$
Thus, the momentum map $\mu$ is defined by the $\mathcal{C}^{\infty}$ map  $\lambda: \Ham(F, \omega, \theta) \rightarrow \mathcal{C}^{\infty}(F)$ associating to a Hamiltonian vector field $X_f$ its unique Hamiltonian function $f$. Therefore, on horizontal vector fields, $\omega(x^*, y^*)=\mu(A(X', Y'))=\lambda(R_{\mathcal{H}}(x, y))$ where $x^*$ and $y^*$ are the horizontal lifts with respect to $\mathcal{H}$ of the vector fields $x$ and $y$ on $B$.
\end{remark}

\smallskip

\noindent{\bf{Step 4.} ({Verifying that $\omega$ is $d_{\Theta}$-closed.})}
We need to show $d_{\Theta}\omega(X, Y, Z)=0$ for all possible combinations of horizontal and vertical arguments.

\noindent{\bf 4.1.} By construction that $d_{\Theta}\omega(X, Y, Z)=0$, for vertical $X$, $Y$ and $Z$. 

\noindent{\bf 4.2.} Let $X$ and $Y$ be vertical and $Z$ is horizontal. Then $\Theta \wedge \omega (X, Y, Z)=0$, since  $\mathcal{H}$ is the $\omega$-orthogonal of the vertical bundle and $\Theta$ vanishes on $\mathcal{H}$. Thus, $d_{\Theta}\omega(X, Y, Z)=0$ is equivalent to $d\omega(X, Y, Z)=0$. But $\mathcal{H}$ is an LCS connection, and hence the flow of any horizontal vector field preserves $(\omega_b, \theta_b)$. This can be written as $(\mathcal{L}_{Z}\omega)(X, Y)=0$, for any horizontal field $Z$ and any vertical fields $X$ and $Y$.
As in \cite[Theorem 1.2.4]{gls}, the following computation shows that $d\omega(X, Y, Z)=0$:
\begin{equation}
d(i_{Z}\omega)(X, Y)=X(i_{Z}\omega(Y)) - Y(i_{Z}\omega(X))-i_{Z}\omega([X, Y])=0
\end{equation}
However, $0=(\mathcal{L}_{Z}\omega)(X, Y)=d(i_{Z}\omega)(X, Y)+i_{Z}d\omega(X, Y)$, hence $i_{Z}d\omega(X, Y)$ and therefore, also $d\omega(X, Y, Z)$ have to vanish.

\noindent{\bf 4.3.} Let now $X$ and $Y$ be horizontal fields and $Z$  vertical. The following relation  is essential for the proof:
\begin{equation}\label{hhv}
-d_{\Theta}\omega(Y, X)(Z)+d_{\Theta}\omega(Y, X, Z)=\omega([X, Y], Z).
\end{equation}
An equivalent form of the above is:
\begin{equation}\label{hhv2}
-d\omega(Y, X)(Z)+d\omega(Y, X, Z)+\omega(Y, X)\Theta(Z)-\Theta \wedge \omega(Y, X, Z)=\omega([X, Y], Z).
\end{equation}
Since horizontal and vertical fields are $\omega$ - orthogonal, $\Theta \wedge \omega(Y, X, Z)=\omega(Y, X)\Theta(Z)$, thus \eqref{hhv2} rewrites as 
$$-d\omega(Y, X)(Z)+d\omega(Y, X, Z)=\omega([X, Y], Z).$$ 
Although proved in \cite{gls} on page 6, we present the proof for the sake of completeness. By using the vanishing of the vertical - horizontal component of $\omega$, Cartan formula and the fact that the Lie bracket of two vector fields, one of which is vertical,  is again vertical, the following computation arises:
\begin{align*}
\mathcal{L}_{Z}\omega(Y, X)  =&(\mathcal{L}_Z\omega)(Y, X) + \omega([Z, Y], X) + \omega(Y, [Z, X])\\
                              =&(di_Z \omega + i_Z d\omega) (Y, X)\\
                              =& Y(i_Z \omega(X)) - X(i_Z \omega (Y)) - i_{Z} \omega ([Y, X]) + d\omega(Z, Y, X)\\
                              =&d\omega (Y, X, Z) - \omega ([X, Y], Z)
\end{align*}
Consequently,  relation \ref{hhv} holds.

Take now $X=x^*$ and $Y=y^*$ (where $x$ and $y$ are vector fields on $B$).   The $\omega$-orthogonality of $\mathcal{H}$ and $\Ver$ give:
$$i_{[a^*, b^*]}\omega(Z)=i_{[a^*, b^*]-[a, b]^*}\omega(Z)=i_{R_{\mathcal{H}}(a, b)}\omega(Z).$$
Using now \ref{rem1}, we further obtain that
$$i_{R_{\mathcal{H}}(a, b)}\omega(Z)=d_{\Theta}\lambda(R_{\mathcal{H}}(a^*, b^*))(Z)=d_{\Theta}\mu(A(a^*, b^*))=d_{\Theta}\omega(a^*, b^*).$$
Consequently, $i_{[a^*, b^*]}\omega(Z)=d_{\Theta}\omega(a^*, b^*)(Z)=-d_{\Theta}\omega(b^*, a^*)(Z)$. Relation (\ref{hhv}) now implies that $d_{\Theta}\omega(b^*, a^*, Z)=0$, hence $d_{\Theta}(X, Y, Z)=0$, for any $X$ and $Y$ horizontal and $Z$ vertical.

\noindent{\bf 4.4.} Finally, let $X, Y$ and $Z$ be horizontal vector  fields.  Since $\Theta$ vanishes on horizontal vector fields, $d_{\Theta}\omega(X, Y, Z)=0$ is equivalent to $d\omega(X, Y, Z)=0$. The proof is the same as in \cite{ms} on page 225, and hence we only indicate a sketch of it.

 Denote by $X^{v}$ the vertical component of the vector field $X$. Then the following relation is straightforward:
$$[[x^*, y^*], z^*]^v=R_{\mathcal{H}}([x, y], z)-[z^*, R_{\mathcal{H}}(x, y)].$$
But $R_{\mathcal{H}}([x, y], z)$ and $[z^*, R_{\mathcal{H}}(x, y)]$ are fiberwise Hamiltonian vector fields. It is easy to show that the fiberwise Hamiltonian function for $[z^*, R_{\mathcal{H}}(x, y)]$ is $\mathcal{L}_{z^*}\omega(x^*, y^*)$ (because $\omega(x^*, y^*)$ is a fiberwise Hamiltonian function for $R_{\mathcal{H}}(x, y)$). So $[[x^*, y^*], z^*]^v$ is a fiberwise Hamiltonian vector field, and its Hamiltonian function is $\omega([x^*, y^*],z^*)-\mathcal{L}_{z^*}\omega(x^*, y^*)$ (again, we used the fact that the vertical-horizontal component of $\omega$ vanishes). By summing over cyclic permutations and using the Jacobi identity, one gets that $\sum_{x,y,z}[[x^*, y^*], z^*]^v=0$, so its Hamiltonian function has to be 0 because of its uniqueness. What we get is:
$$\sum_{x,y,z}\omega([x^*, y^*],z^*)-\sum_{x,y,z}\mathcal{L}_{z^*}\omega(x^*, y^*)=0$$ and this is precisely the expression of $d\omega(x^*,y^*,z^*)$, which ends the proof.

\smallskip

\noindent{\bf{Step 5.} ({The non-degeneracy of $\omega$.})} 
Choosing a fat connection on $P$ at $\mathrm{Im}\mu$ will assure the non-degeneracy of $\omega$ on the horizontal space, which is the only thing we need to know, since on the vertical bundle it is already non-degenerate, thus ending the construction.

\begin{remark} Circle LCS actions provide more easily examples of LCS manifolds constructed as in \ref{main}. The reason is that principal $S^1$-bundles over some manifold $M$ are parametrized by $H^2(M, \Z)$. Moreover, if one chooses a symplectic manifold $M$ whose symplectic form $\omega$ has integral cohomology class, then the corresponding $S^1$-bundle over $M$ is endowed with a fat connection, namely the 1-form $\alpha$ satisfying $\pi^*\omega=d\alpha$, where $\pi$ is the natural projection $\pi: P \rightarrow M$. The non-degeneracy of $\omega$ implies the fatness of $\alpha$. The 2-form $d\alpha$ represents also the curvature of $\alpha$, hence the coupling form of the bundle $P \times_{S^1} F$ rewrites on the horizontal part as $\Omega(X^*, Y^*)=\mu(\pi^*\omega(X', Y'))$, where $X^*$ and $X'$ denote the lift                                                                                               of the vector field $X$ on $P \times_{S^1}F$ and respectively on $P$.
\end{remark}

\section {Twisted Hamiltonian actions}

The above results show the need to  find twisted Hamiltonian actions on LCS manifolds. In this section we give several criterions for the existence of such actions and conclude with several examples. 

We already used and proved the following result within the proof of  \ref{main}:

\begin{proposition} Let $(M, \omega, \theta)$ be an LCS manifold and let $G$ be a Lie group acting on $M$. If the action of $G$ is twisted Hamiltonian, then necessarily $\theta(\rho(v))=0$ for any fundamental vector field $\rho(v)$ of the action.
\end{proposition}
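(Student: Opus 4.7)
The proof is essentially the computation carried out in Step 2 of the proof of Theorem~\ref{main}, so the plan is to isolate that argument cleanly. The key ingredients are: (i) the twisted Hamiltonian condition gives $i_{\rho(v)}\omega = d_\theta f$ for some $f \in \mathcal{C}^\infty(M)$; (ii) the definition of twisted Hamiltonian also includes $G$-invariance of $\omega$, so $\mathcal{L}_{\rho(v)}\omega = 0$; (iii) the LCS condition reads $d\omega = \theta \wedge \omega$; and (iv) $\omega$ is nondegenerate.

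The plan is to compute $\mathcal{L}_{\rho(v)}\omega$ via Cartan's formula in two different ways and compare. On the one hand, $G$-invariance of $\omega$ forces $\mathcal{L}_{\rho(v)}\omega = 0$. On the other hand, expanding
\[
\mathcal{L}_{\rho(v)}\omega = d(i_{\rho(v)}\omega) + i_{\rho(v)}(d\omega)
\]
and substituting $i_{\rho(v)}\omega = d_\theta f = df - f\theta$ together with $d\omega = \theta \wedge \omega$, I would observe that $d(d_\theta f) = -df \wedge \theta = \theta \wedge df$, while
\[
i_{\rho(v)}(\theta \wedge \omega) = \theta(\rho(v))\,\omega - \theta \wedge i_{\rho(v)}\omega = \theta(\rho(v))\,\omega - \theta \wedge df + f\,\theta \wedge \theta.
\]
Adding these two and using $\theta \wedge \theta = 0$, the terms involving $df$ cancel, leaving $\mathcal{L}_{\rho(v)}\omega = \theta(\rho(v))\,\omega$.

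Combining the two computations yields $\theta(\rho(v))\,\omega = 0$, and nondegeneracy of $\omega$ then forces $\theta(\rho(v)) = 0$ pointwise, which is the desired conclusion. There is no real obstacle here: the only small pitfall is to keep careful track of signs when expanding $d_\theta$ and interior products, so the main ``difficulty'' is bookkeeping rather than substance. Notice that the argument uses both parts of the twisted Hamiltonian definition (the existence of a primitive for $i_{\rho(v)}\omega$ under $d_\theta$, and the invariance of $\omega$); either alone would not suffice.
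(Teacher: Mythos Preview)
Your proposal is correct and follows essentially the same argument as the paper: the paper's proof simply refers back to Step~2 of the proof of \ref{main}, which is exactly the Cartan-formula computation you outline, arriving at $\mathcal{L}_{\rho(v)}\omega=\theta(\rho(v))\,\omega$ and then invoking $G$-invariance and nondegeneracy of $\omega$. The only cosmetic difference is that the paper evaluates the identity on a pair $(X,Y)$ and groups terms via $\theta\wedge d_\theta f$ rather than expanding $d_\theta f=df-f\theta$ as you do.
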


\smallskip

The condition $\theta(\rho(v))=0$ is also sufficient in the following case:

\begin{proposition}\label{ant} Let $(M, \omega, \theta)$ be an LCS manifold with $H_{\theta}^{1}(M, \mathbb{R})=0$. Then the action of a group $G$ is twisted Hamiltonian if and only if $\theta(\rho(v))=0$. 
\end{proposition}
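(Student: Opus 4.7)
The ``only if'' implication is already Proposition 5.1, so the plan is entirely concerned with the converse: assuming that $G$ acts preserving $\omega$ and that $\theta(\rho(v))=0$ for every fundamental vector field $\rho(v)$, I want to show that $i_{\rho(v)}\omega$ is $d_\theta$-exact. The strategy is to first establish $d_\theta$-closedness by a Cartan-formula computation that directly mirrors the one already carried out in Step 2 of the proof of Theorem 4.1 (the main theorem), and then to pass from closedness to exactness by invoking the cohomological hypothesis $H^1_\theta(M,\mathbb{R})=0$.

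First I would write, using Cartan's formula and the fact that $d\omega=\theta\wedge\omega$,
\begin{equation*}
0=\mathcal{L}_{\rho(v)}\omega=i_{\rho(v)}d\omega+d\bigl(i_{\rho(v)}\omega\bigr)=i_{\rho(v)}(\theta\wedge\omega)+d\bigl(i_{\rho(v)}\omega\bigr).
\end{equation*}
Expanding $i_{\rho(v)}(\theta\wedge\omega)=\theta(\rho(v))\,\omega-\theta\wedge i_{\rho(v)}\omega$ and using the hypothesis $\theta(\rho(v))=0$ gives
\begin{equation*}
d\bigl(i_{\rho(v)}\omega\bigr)=\theta\wedge i_{\rho(v)}\omega,
\end{equation*}
which is precisely the statement that $d_\theta\bigl(i_{\rho(v)}\omega\bigr)=0$. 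Thus $i_{\rho(v)}\omega$ is a $d_\theta$-closed one-form on $M$.

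At this point the second step is immediate: since by hypothesis $H^1_\theta(M,\mathbb{R})=0$, every $d_\theta$-closed one-form is $d_\theta$-exact, so there exists a smooth function $\psi(v)\in\mathcal{C}^\infty(M)$ with $i_{\rho(v)}\omega=d_\theta\psi(v)$. This is exactly the definition of a twisted Hamiltonian action. To obtain a bona fide momentum map one should verify that $v\mapsto\psi(v)$ can be taken linear in $v\in\mathfrak{g}$; this is automatic because $v\mapsto i_{\rho(v)}\omega$ is linear and, under the hypothesis $H^1_\theta(M,\mathbb{R})=0$, the $d_\theta$-primitive is unique up to $H^0_\theta(M,\mathbb{R})$, so choosing primitives on a basis of $\mathfrak{g}$ and extending by linearity is well defined.

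There is essentially no hard step here: the whole argument is a one-line Cartan computation plus an appeal to the hypothesis on Morse--Novikov cohomology. The only subtle point worth stating is the role of the $\omega$-invariance of the action, which is part of the standing convention in the paper (the author works throughout with actions preserving $\omega$); without it the Cartan identity would pick up an extra term $\mathcal{L}_{\rho(v)}\omega$ and the equivalence would fail.
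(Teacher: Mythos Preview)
Your proof is correct and essentially identical to the paper's: both carry out the same Cartan-formula computation to show $d_\theta(i_{\rho(v)}\omega)=0$, then invoke the hypothesis $H^1_\theta(M,\mathbb{R})=0$ to obtain exactness. Your additional remark about the linearity of $v\mapsto\psi(v)$ is a small bonus that the paper omits.
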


\begin{proof} The direct implication is clear. For the converse, we prove that $i_{\rho(v)}\omega$ is $d_{\theta}$-closed. By using the Cartan formula and the $G$ - invariance of $\omega$, we get:
\begin{align*} d_{\theta}i_{\rho(v)}\omega&=di_{\rho(v)}\omega - \theta \wedge i_{\rho(v)}\omega\\
&=\mathcal{L}_{\rho(v)}\omega-i_{\rho(v)}d\omega-\theta \wedge i_{\rho(v)}\omega\\
&=-i_{\rho(v)}\theta\wedge\omega-\theta\wedge i_{\rho(v)}\omega\\
&=0.
\end{align*}

Since $H^{1}_{\theta}(M, \mathbb{R})$ vanishes, any $d_{\theta}$-closed form is $d_{\theta}$-exact, hence the action is twisted Hamiltonian.
\end{proof}

The next result adds a condition for an action to be twisted Hamiltonian in terms of the Lie algebra of the group:

\begin{proposition} If $G$ is a Lie group with perfect Lie algebra $\mathfrak{g}$ acting on the LCS manifold $(M, \omega, \theta)$ by preserving $\omega$ and $\theta(\rho(v))=0$, for any $v$ in $\mathfrak{g}$, then $G$ acts twisted Hamiltonian.
\end{proposition}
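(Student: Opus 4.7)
The plan is to exploit perfectness of $\mathfrak{g}$: every $v\in\mathfrak{g}$ can be written as a finite sum $v=\sum_i[a_i,b_i]$, so it suffices to produce an explicit $d_\theta$-primitive of $i_{\rho([a,b])}\omega$ for each commutator and then extend by linearity. The closedness of $i_{\rho(v)}\omega$ under $d_\theta$ is free: the very computation in the preceding proposition, which used only $\mathcal{L}_{\rho(v)}\omega=0$ and $\theta(\rho(v))=0$, applies verbatim. So the genuine content is the existence of a primitive in the commutator case.

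For $v=[a,b]$, I would replace $\rho([a,b])$ by $\pm[\rho(a),\rho(b)]$ (depending on the convention for the infinitesimal action) and apply the standard identity $i_{[X,Y]}=\mathcal{L}_X i_Y-i_Y\mathcal{L}_X$. Invariance of $\omega$ kills the second term, giving $i_{[\rho(a),\rho(b)]}\omega=\mathcal{L}_{\rho(a)}i_{\rho(b)}\omega$. Expanding by Cartan and using $d\omega=\theta\wedge\omega$ together with $\theta(\rho(b))=0$ yields $d\,i_{\rho(b)}\omega=\theta\wedge i_{\rho(b)}\omega$, and then $\theta(\rho(a))=0$ collapses the mixed term to $i_{\rho(a)}d\,i_{\rho(b)}\omega=-\theta\wedge i_{\rho(a)}i_{\rho(b)}\omega$. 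Combining the two contributions gives precisely
\[
i_{[\rho(a),\rho(b)]}\omega \;=\; d(i_{\rho(a)}i_{\rho(b)}\omega)-\theta\wedge(i_{\rho(a)}i_{\rho(b)}\omega)\;=\;d_\theta\bigl(\omega(\rho(b),\rho(a))\bigr),
\]
so the function $f_{a,b}:=\pm\,\omega(\rho(b),\rho(a))$ is the required Hamiltonian for $\rho([a,b])$. Summing over a decomposition $v=\sum_i[a_i,b_i]$ delivers a Hamiltonian $\sum_i f_{a_i,b_i}$ for $\rho(v)$, and the action is therefore twisted Hamiltonian.

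No step looks like a serious obstacle; the only things demanding attention are the sign in $\rho([a,b])=\pm[\rho(a),\rho(b)]$ and the sign bookkeeping in Cartan's formula. The role of the two hypotheses is already transparent from the computation: $G$-invariance of $\omega$ is what lets us replace $i_{[\rho(a),\rho(b)]}$ by $\mathcal{L}_{\rho(a)}i_{\rho(b)}$, while $\theta(\rho(\cdot))=0$ is exactly what is needed to annihilate the two extra wedge terms that appear when passing from $d$ to $d_\theta$. Perfectness of $\mathfrak{g}$ is then precisely the combinatorial input that propagates the commutator computation to every element of the Lie algebra.
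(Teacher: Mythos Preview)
Your proposal is correct and follows essentially the same approach as the paper: both establish $d_\theta$-closedness of $i_{\rho(v)}\omega$ from the previous proposition, both compute an explicit $d_\theta$-primitive $\pm\omega(\rho(a),\rho(b))$ for $i_{\rho([a,b])}\omega$ via the identity $i_{[X,Y]}=\mathcal{L}_X i_Y-i_Y\mathcal{L}_X$ together with Cartan's formula, and both conclude by perfectness. The only cosmetic difference is that the paper packages the argument by defining a linear map $\lambda:\mathfrak{g}\to H^1_\theta(M)$, $v\mapsto[i_{\rho(v)}\omega]$, and shows $[\mathfrak{g},\mathfrak{g}]\subset\ker\lambda$, whereas you write out $v=\sum_i[a_i,b_i]$ explicitly; the content is identical.
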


\begin{proof} As in \ref{ant}, we get that $i_{\rho(v)}\omega$ is $d_{\theta}$-closed.
Hence we can define a map $\lambda: \mathfrak{g} \rightarrow H^1_{\theta}(M)$ by 
$$v \mapsto [i_{\rho(v)}\omega].$$
We prove next  that whenever $X$ and $Y$ have $\omega$-preserving flows and $\theta(X)=\theta(Y)=0$, then $[X, Y]$ is a twisted Hamiltonian vector field (i.e. $i_{[X, Y]}\omega$ is $d_{\theta}$ -exact). Indeed, by using the Lie derivative properties, we get:
\begin{align*}
i_{\mathcal{L}_XY}\omega &=\mathcal{L}_X(i_{Y}\omega)-i_{Y}\mathcal{L}_X\omega\\
&=di_{X}(i_{Y}\omega)+i_Xd(i_Y\omega)+0\\
&=-d_{\theta}(\omega(X, Y))
\end{align*}
By the easily verifiable relation $[\rho(v), \rho(w)]=-\rho([v, w])$ and by taking $\rho(v)$ and $\rho(w)$ as $X$ and $Y$ above, one obtains that $[\mathfrak{g}, \mathfrak{g}]$ is in the kernel of $\lambda$. Since $\mathfrak{g}$ is perfect, $\lambda$ vanishes identically, hence the action is twisted Hamiltonian.
\end{proof}

\begin{remark}   The condition $\theta(\rho(v))=0$ plays a very important role in finding twisted actions. By the Cartan formula and the fact that $\theta$ is closed and $G$-invariant, we get that $0=\mathcal{L}_{\rho(v)}\theta=d(\theta(\rho(v))).$ Therefore, $\theta(\rho(v))$ is always a constant (we always assume the manifolds we work with are connected!). We observe that whenever the action of $G$ has a fixed point ($m$ is fixed if $g \cdot m = m$, for any $g \in G$), $\theta(\rho(v))$ vanishes, since the fixed points provide zeros of $\rho(v)$. 

Another case that triggers the desired equality is when $\theta$ has vanishing points or in general when $M$ is a {\em second kind} LCS manifold. This is a notion introduced in \cite{v}: first and second kind LCS manifold are defined in terms of the Lee homomorphism (which associates to any vector field $X$ whose flow preserves the LCS form $\omega$ the constant $\theta(X)$). A second kind LCS manifold is then defined as having the Lee homomorphism trivial, thus satisfying the condition $\theta(\rho(v))=0$. 
\end{remark}

\smallskip

The next criterion deals with $d_{\theta}$ - exact LCS forms.

\begin{lemma}\label{fixed}  Let $(F,\omega,\theta)$ be a connected LCS manifold with $d_\theta$-exact LCS form, $\omega=d_{\theta}\eta$, and let $G$ act on $F$ by preserving $\eta$ and $\theta$. If $G$ has at least one fixed point, then the action of $G$ is twisted Hamiltonian.
\end{lemma}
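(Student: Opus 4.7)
The plan is to compute $i_{\rho(v)}\omega$ directly, using the assumption $\omega = d_\theta\eta$ together with the invariances $\mathcal{L}_{\rho(v)}\eta = 0$ and $\mathcal{L}_{\rho(v)}\theta = 0$, and then read off an explicit $d_\theta$-primitive. Expanding
\[
i_{\rho(v)}\omega = i_{\rho(v)}d\eta - i_{\rho(v)}(\theta\wedge\eta)
\]
and applying Cartan's formula to the first term gives $i_{\rho(v)}d\eta = \mathcal{L}_{\rho(v)}\eta - d(\eta(\rho(v))) = -d(\eta(\rho(v)))$, while the second term equals $\theta(\rho(v))\eta - \eta(\rho(v))\theta$. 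Setting $f := \eta(\rho(v))$ and $c := \theta(\rho(v))$, I obtain
\[
i_{\rho(v)}\omega = -df + f\theta - c\eta = d_{\theta}(-f) - c\eta.
\]

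The only obstruction to exactness is therefore the term $c\eta$, and the main point of the proof is to show $c = 0$. This is where the fixed point hypothesis enters. As observed in the remark just before the lemma, $\mathcal{L}_{\rho(v)}\theta = 0$ together with $d\theta = 0$ gives $d(\theta(\rho(v))) = 0$, so $c$ is a constant on the connected manifold $F$. If $p_0 \in F$ is a fixed point of $G$, then $\rho(v)_{p_0} = 0$, so evaluating $c$ at $p_0$ yields $c = 0$.

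Consequently $i_{\rho(v)}\omega = d_\theta(-\eta(\rho(v)))$, which is $d_\theta$-exact, so the action is twisted Hamiltonian, with momentum map determined by $\psi(v) = -\eta(\rho(v))$. The hard step conceptually is recognizing that preservation of $\eta$ (not merely $\omega$) is what makes the primitive $-\eta(\rho(v))$ available in closed form, and that the $c\eta$ obstruction, which has no reason to be $d_\theta$-exact in general, is precisely killed by the fixed point assumption via the constancy of $\theta(\rho(v))$.
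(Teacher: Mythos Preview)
Your proof is correct and follows essentially the same route as the paper's: both use Cartan's formula on $\mathcal{L}_{\rho(v)}\eta=0$ together with $\omega=d_\theta\eta$ to obtain $i_{\rho(v)}\omega=-d_\theta(\eta(\rho(v)))$, and both kill the obstruction $\theta(\rho(v))$ via the fixed point after noting it is constant. The only cosmetic difference is ordering: the paper first invokes the preceding remark to set $\theta(\rho(v))=0$ and then computes, whereas you carry the term $c\eta$ through the computation and eliminate it at the end.
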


\begin{proof}
We want to show that $i_{\rho(v)}\omega$ is $d_{\theta}$-exact. The condition $\theta(\rho(v))=0$ is satisfied, since the hypothesis fits one of the situations described in the previous remark. Using this and the Cartan formula again, we obtain:
\begin{align*}
0=\mathcal{L}_{\rho(v)}\eta &=di_{\rho(v)}\eta+i_{\rho(v)}d\eta\\
                                                  &=d \eta(\rho(v))+i_{\rho(v)}(\omega+\theta\wedge\eta)\\
                                                  &=d \eta(\rho(v)) + i_{\rho(v)}\omega - \eta(\rho(v))\theta.
\end{align*}
This yields $i_{\rho(v)}\omega=-d_{\theta}\eta(\rho(v))$ and the action is indeed twisted Hamiltonian. 
\end{proof}

\begin{remark}
For the above twisted Hamiltonian action, the twisted momentum map is given by
$$\mu:F \rightarrow \g^{*}, \quad 
\mu(f)(v)=-\eta(\rho(v))(f).$$
\end{remark}

\begin{remark}
 Let $M$ be a contact manifold and $G$ a Lie group acting by preserving some contact form $\alpha$. If we let $G$ act trivially on $S^1$, then the action of $G$ on $S^1 \times M$ is twisted Hamiltonian with respect to the lcs form $d_{d\mathrm{vol}}\alpha$, where $d\mathrm{vol}$ is a volume form on $S^1$. The proof follows the arguments above, because the condition of existence of at least one fixed point can be dropped. Indeed, this condition was important only for achieving the vanishing of $d\mathrm{vol}(\rho(v))=0$. But this is also the case, since any fundamental vector field $\rho(v)$ is tangent to $M$ only, hence the $S^1$-component vanishes. Note that there are many examples of LCS manifolds of the form $S^1\times M$ with $M$ contact (in particular, Sasakian, \cite{ov}). See also \ref{conbun}.
\end{remark}

\smallskip

Another approach towards finding twisted Hamiltonian actions is moving from the LCS manifold to one of its symplectic coverings. Unfortunately, the action of a group does not in general lift to an action on a covering space. However, if $G$ is the Lie group acting on $M$ and $\tilde{M}$ is one of its coverings, one can always define on $\tilde{M}$ an action of the connected component of the identity of the universal covering of $G$, which we shall denote henceforth by $\tilde{G_0}$. For more details, see \cite{mo}.

Intuitively, a twisted Hamiltonian action lifts to a Hamiltonian one and the reversed scenario also should work. Indeed, this will be the case under some minor extra requirements. The following two results concern this situation.

\begin{proposition}\label{propa} Let $(M, \omega, \theta)$ be an LCS manifold and $\pi: \tilde{M} \rightarrow M$ one of its symplectic coverings. If $G$ is a Lie group acting twisted Hamiltonian on $M$, then $\tilde{G_0}$ acts Hamiltonian on $\tilde{M}$.
\end{proposition}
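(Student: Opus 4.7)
The plan is to translate the twisted Hamiltonian datum on $(M,\omega,\theta)$ directly into an honest Hamiltonian datum on the symplectic cover via the conformal factor relating the two forms.

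First I would set up the covering precisely: since $\pi:\tilde M\to M$ is a symplectic covering, $\pi^*\theta$ is exact, say $\pi^*\theta=df$ for some $f\in C^\infty(\tilde M)$, and then $\Omega:=e^{-f}\pi^*\omega$ is the (closed, non-degenerate) symplectic form on $\tilde M$. Next I would invoke the general lifting fact (the reference to \cite{mo}) to produce the action of $\tilde G_0$ on $\tilde M$: infinitesimally, for each $v\in\g$ there is a vector field $\tilde\rho(v)\in\mathcal{X}(\tilde M)$ which is $\pi$-related to $\rho(v)$, i.e.\ $\pi_*\tilde\rho(v)=\rho(v)$. Because $\rho(v)$ preserves both $\omega$ and $\theta$, its lift preserves $\pi^*\omega$ and $\pi^*\theta=df$; in particular $\tilde\rho(v)(f)$ is constant (and, using connectedness plus the twisted Hamiltonian hypothesis $\theta(\rho(v))=0$, in fact zero — this will make the computation below cleaner).

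The heart of the argument is a one-line calculation. Writing $\mu(v)\in C^\infty(M)$ for the twisted momentum, so that $i_{\rho(v)}\omega=d_\theta\mu(v)=d\mu(v)-\mu(v)\theta$, I would compute
\begin{align*}
i_{\tilde\rho(v)}\Omega
&= e^{-f}\,\pi^*\!\bigl(i_{\rho(v)}\omega\bigr)
 = e^{-f}\,\pi^*\!\bigl(d\mu(v)-\mu(v)\theta\bigr)\\
&= e^{-f}\,d\bigl(\pi^*\mu(v)\bigr)-e^{-f}\,\pi^*\mu(v)\,df\\
&= d\bigl(e^{-f}\pi^*\mu(v)\bigr),
\end{align*}
using $\pi^*\theta=df$ in the second line and the Leibniz rule in the third. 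Thus $\tilde\mu(v):=e^{-f}\pi^*\mu(v)$ is a Hamiltonian for $\tilde\rho(v)$ on $(\tilde M,\Omega)$, which is exactly what is required. One checks that the assignment $v\mapsto\tilde\mu(v)$ is linear in $v$ (it is, since $\mu$ is and pullback is linear), so it defines a comoment/momentum map for the $\tilde G_0$-action.

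The only delicate point — and the one step I would flag as the real content beyond bookkeeping — is the lifting of the action itself: showing that an infinitesimal $\g$-action on $M$ lifts to an infinitesimal action integrating to $\tilde G_0$ on $\tilde M$, with vector fields $\pi$-related to the original ones. The author defers this to \cite{mo}, so in the proof I would simply cite that construction and use the $\pi$-relatedness as the only property needed. Everything else is formal once $\Omega=e^{-f}\pi^*\omega$ and $d_\theta$ is expanded; no further analytic input is required, and the Hamiltonian $\tilde\mu$ is explicit.
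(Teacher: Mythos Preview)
Your proof is correct and follows essentially the same route as the paper: write $\pi^*\theta=df$, set $\Omega=e^{-f}\pi^*\omega$, use $\pi$-relatedness of the fundamental vector fields, and compute $i_{\tilde\rho(v)}\Omega=e^{-f}\pi^*(d_\theta\mu(v))=d(e^{-f}\pi^*\mu(v))$. The only differences are cosmetic (naming, and your slightly more explicit expansion of $d_\theta$ and remark on linearity).
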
  

\begin{proof} 


Let $\pi^*\theta=dh$ on the covering $\tilde{M}$. The two-form $\Omega=e^{-h}\pi^*\omega$ is symplectic. We denote by $\tilde{\rho}(v)$ the fundamental vector field of $v$ with respect to the action of $\tilde{G_0}$ on $\tilde{M}$. One can easily show that $\pi_{*}\tilde{\rho}(v)=\rho(v)$, hence $i_{\tilde{\rho}(v)}\Omega=e^{-h}i_{\tilde{\rho}(v)}\pi^{*}\omega=e^{-h}\pi^{*}(i_{\rho(v)}\omega)$. However, $G$ acts twisted Hamiltonian on $M$, therefore there exists $s$ such that $i_{\rho(v)}\omega=d_{\theta}s$, meaning that 
$$i_{\tilde{\rho}(v)}\Omega=e^{-h}\pi^{*}(d_{\theta}s)=e^{-h}\pi^{*}(d_{dh}s)=d(e^{-h}\pi^{*}s),$$ 
and the action is thus Hamiltonian. 
\end{proof}


\smallskip

We present next a partial converse of \ref{propa} and following the definitions in \cite{gopp}, we recall:

\begin{definition} Let $\tilde{M}$ be the universal covering of an LCS manifold and $\chi: \pi_1(M) \rightarrow R^{+}$ the character of $M$. Then $\bar{M}:=\tilde{M}/\Ker(\chi)$ is called the minimal covering of $M$.
\end{definition}

\begin{remark}\label{abel} $\mathrm{Deck}(\bar{M}/M)$ is an abelian group, since it is isomorphic to the quotient $\pi_1(M)/\Ker(\chi)$, which is a subgroup of $\R^+$. Moreover, $\mathrm{Deck}(\bar{M}/M)$ acts only by proper homotheties, since factorizing to $\Ker \chi$ means factorizing to the isometries of $\pi_1(M)$.
\end{remark}

\begin{theorem}\label{minimal} 
Let $\pi: \bar{M} \rightarrow M$ be the minimal covering of the LCS manifold $(M, \omega, \theta)$ and let $\Omega$ be the symplectic form of $\bar{M}$. Let $G$ be a Lie group acting on $M$ by preserving $\omega$. Then $G$ acts twisted Hamiltonian on $M$ if and only $\tilde{G_0}$ acts Hamiltonian on $\bar{M}$.
\end{theorem}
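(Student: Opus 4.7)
The forward direction follows at once by applying \ref{propa} to $\bar M$, which is a symplectic covering since a primitive $h$ of $\pi^*\theta$ on the universal cover descends to $\bar M = \tilde M/\Ker\chi$. The plan is to address the converse: assuming $\tilde{G_0}$ acts Hamiltonian on $(\bar M, \Omega)$ with $\Omega = e^{-h}\pi^*\omega$, I would construct a twisted momentum map for the $G$-action on $M$.

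Fix $v \in \g$ and a Hamiltonian $F_v \in \mathcal C^\infty(\bar M)$ for $\tilde\rho(v)$, i.e.\ $i_{\tilde\rho(v)}\Omega = dF_v$. The first step is to determine how $F_v$ transforms under $\Gamma := \Deck(\bar M/M)$. Since deck transformations commute with the lifted action, $\gamma_*\tilde\rho(v) = \tilde\rho(v)$, and together with $\gamma^*\Omega = \chi(\gamma)\Omega$ this forces
$$\gamma^* F_v = \chi(\gamma) F_v + \alpha(\gamma)$$
for some constant $\alpha(\gamma) \in \RR$. A routine calculation then identifies $\alpha \colon \Gamma \to \RR$ as a 1-cocycle for the $\Gamma$-action on $\RR$ through $\chi$, namely $\alpha(\gamma_1\gamma_2) = \alpha(\gamma_1) + \chi(\gamma_1)\alpha(\gamma_2)$.

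The heart of the proof, and the main obstacle, will be showing that $\alpha$ is a coboundary --- this is where the choice of the \emph{minimal} covering is decisive. By \ref{abel}, $\Gamma$ is abelian and $\chi$ is injective on it, so $\chi(\gamma) \neq 1$ for every $\gamma \neq e$. Exchanging $\gamma_1$ and $\gamma_2$ in the cocycle identity and invoking commutativity yields the symmetry $\alpha(\gamma_1)(1 - \chi(\gamma_2)) = \alpha(\gamma_2)(1 - \chi(\gamma_1))$, from which I would conclude that $c := \alpha(\gamma)/(\chi(\gamma) - 1)$ is independent of $\gamma \neq e$. Replacing $F_v$ by $F_v + c$ (still a Hamiltonian for $\tilde\rho(v)$) then produces the automorphy relation $\gamma^* F_v = \chi(\gamma) F_v$.

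Once $F_v$ is automorphic, $e^h F_v$ is $\Gamma$-invariant (because $\gamma^* e^h = \chi(\gamma)^{-1} e^h$) and hence descends to some $s_v \in \mathcal C^\infty(M)$ with $\pi^* s_v = e^h F_v$. A short computation using $\pi^*\theta = dh$ then verifies $\pi^*(d_\theta s_v) = e^h\, dF_v = \pi^*(i_{\rho(v)}\omega)$, and injectivity of $\pi^*$ gives the desired identity $i_{\rho(v)}\omega = d_\theta s_v$. Smoothness and linearity of $v \mapsto s_v$ reduce to those of $v \mapsto F_v$ together with the linear dependence of the extracted constant $c(v)$ on $F_v$. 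The argument genuinely requires the minimal covering: abelianness of $\Gamma$ symmetrizes the cocycle, while injectivity of $\chi$ on $\Gamma$ permits division by $\chi(\gamma)-1$, neither of which holds on a generic symplectic covering.
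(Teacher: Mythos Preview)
Your proposal is correct and follows essentially the same route as the paper's proof: both establish the transformation law $\gamma^*F_v = \chi(\gamma)F_v + \alpha(\gamma)$, use abelianness of $\Gamma$ together with $\chi(\gamma)\neq 1$ on the minimal cover to show that $\alpha(\gamma)/(\chi(\gamma)-1)$ is independent of $\gamma$, and then descend $e^h(F_v - k)$ to $M$. Your framing in terms of group cohomology (cocycle/coboundary) is a clean way to organize the same computation, and your remark on the linear dependence of $s_v$ on $v$ is a detail the paper leaves implicit.
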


\begin{proof} 
The direct implication is exactly \ref{propa}, so we are left only with the converse.
Let us denote by $c_{\gamma}$ all the scalars of the homotheties in $\mathrm{Deck}(\bar{M}/M)$ (meaning $\gamma^*\Omega=c_{\gamma}\Omega$, for any $\gamma$ in $\mathrm{Deck}(\bar{M}/M)$). By \ref{abel}, we have $c_{\gamma} \neq 1$. The symplectic form $\Omega$ is equal to $e^{-h}\pi^*\omega$, where $\pi^*\theta =dh$. It is a straightforward computation to show that $c_{\gamma}=e^{-\gamma^*h+h}$.

The Hamiltonian action of $\tilde{G_0}$ on $\bar{M}$ implies that for any $v$ in the Lie algebra $\g$: 
$$i_{\tilde{\rho}(v)}\Omega=df.$$
One can show that $i_{\tilde{\rho}(v)}\Omega$ is an automorphic 1-form, so $df$ is automorphic, whence we get that $\gamma^*f-c_{\gamma}f$ is constant. We shall denote henceforth this constant depending on $\gamma$ by $a_{\gamma}$.

Next, we prove that 
\begin{equation}\label{constantak}
k(\gamma): =\frac{a_{\gamma}}{1-c_{\gamma}} 
\end{equation}
is a constant not depending on $\gamma$ in $\mathrm{Deck}(\bar{M}/M)$.

In order to prove it, we need to see how the constant $a_{\gamma}$ behaves for the composition of two elements $\gamma_1$ and $\gamma$. By pulling back the equality $\gamma^{*}f-c_{\gamma}f=a_{\gamma}$ with $\gamma_{1}$, we get $\gamma_1^*\gamma^*f - c_{\gamma}\gamma_1^*f=a_{\gamma}$, which is equivalent to
$$(\gamma \circ \gamma_1)^*f - c_{\gamma_1}c_{\gamma}f+c_{\gamma_1}c_{\gamma}f - c_{\gamma}\gamma_1^*f=a_{\gamma}.$$
Consequently, 
$$a_{\gamma \circ \gamma_1} - c_{\gamma}a_{\gamma_1} =a_{\gamma}$$ for any $\gamma_1$, $\gamma$ in $\mathrm{Deck}(\bar{M}/M)$. By changing $\gamma$ with $\gamma_1$ in the equality above, we get:
$$a_{\gamma_1 \circ \gamma}-c_{\gamma_1}a_{\gamma}=a_{\gamma_1}.$$
However, $\mathrm{Deck}(\bar{M}/M)$ is abelian by \ref{abel}, hence $a_{\gamma_1 \circ \gamma}=a_{\gamma \circ \gamma_1}$. Combining the previous two equalities, we get
$$c_{\gamma_1}a_{\gamma}+a_{\gamma_1}=c_{\gamma}a_{\gamma_1}+a_{\gamma},$$
so $a_{\gamma_1}(1-c_\gamma)=a_{\gamma}(1-c_{\gamma_1})$, for any $\gamma$ and $\gamma_1$ in $\mathrm{Deck}(\bar{M}/M)$, whence we conclude that $k(\gamma)=k$ is constant. 

We now explain why the action of $G$ is twisted Hamiltonian. We first notice that $e^h(f-k)$ is $\mathrm{Deck}(\bar{M}/M)$-invariant, and hence descends to $M$. Indeed, 
\begin{equation*}
\begin{split}
\gamma^{*}e^h(f-k)&=e^{\gamma^*h}(\gamma^*f-k)=\tfrac{e^{h}}{c_{\gamma}}(a_\gamma+c_{\gamma}f-k)\\
&=\tfrac{e^{h}}{c_{\gamma}}(k(1-c_{\gamma})+c_\gamma f-k)=e^h(f-k).
\end{split}
\end{equation*}
Therefore, there exists some function $r$ on $M$ such that $e^h(f-k)=\pi^{*}r$. This further yields:
$$df=d(e^{-h}\pi^{*}r)=e^{-h}d_{dh}\pi^*r=e^{-h}d_{\pi^*\theta}\pi^{*}r=e^{-h}\pi^{*}(d_{\theta}r).$$
Nevertheless, 
$$df=i_{\tilde{\rho}(v)}\Omega=e^{-h}i_{\tilde{\rho}(v)}\pi^*\omega=e^{-h}\pi^{*}(i_{\rho(v)}\omega),$$
 and hence we obtain  
$$\pi^*(i_{\rho(v)}\omega)=e^hdf=\pi^{*}(d_{\theta}r),$$
 meaning that the action is indeed twisted Hamiltonian.
\end{proof}

\begin{remark}\label{rem} If the universal covering $\tilde{M}$ coincides with the minimal covering, the requirement for the action of $G$ on $\tilde{M}$ to be Hamiltonian simplifies to preserving the symplectic form, since $\tilde{M}$ is simply connected. 
\end{remark}

\begin{remark}\label{inoue} \ref{minimal} does not work in general for any covering space different from the minimal one. Let us take $\tilde{M}$ a symplectic covering of $M$ and $G$ a Lie group acting by preserving $\omega$. If the lift                                                                                               of this action to $\tilde{M}$, namely the action of the covering $\tilde{G_0}$, is Hamiltonian with respect to the symplectic form $e^{-h}\pi^*\omega$, then $G$ is twisted Hamiltonian if and only if one can always choose on $\tilde{M}$ an automorphic Hamiltonian function for any fundamental vector field, as the proof of \ref{minimal} unravels. Fortunately, this can always be done on the minimal covering, since the deck transformations are not isometries and the function introduced at \eqref{constantak} is constant. We shall give at the end of the section an example consisting of a Hamiltonian action on the universal cover of an LCS manifold, which does not come from a twisted Hamiltonian action.
\end{remark}

\begin{example} 
 We construct a twisted  Hamiltonian action on diagonal Hopf manifolds. We regard $S^{2n-1}$ as embedded in $\C^n \setminus\{0\}$ by:
$$S^{2n-1}=\{(z_1, z_2, \ldots, z_{n})\,|\, \vert z_{1} \vert^2 + \cdots + \vert z_{n} \vert^2=1\}.$$

There are many contact structures on  $S^{2n-1}$ (see, {\em e.g.} \cite{ko}). Namely, every collection of real numbers (here called {\em weights}) $a_{1}, \ldots, a_{n}$, satisfying:
$$0<a_1\leq a_2 \ldots \leq a_n$$
provides us with a contact form 
$$\eta_{a_1, \ldots, a_n}=\tfrac{1}{\sum_{i}a_i\vert z_i \vert^2}\eta_0$$ which is a deformation of the standard one  of $S^{2n-1}$: $\eta_0 = \sum_{i}y_idx_i - x_idy_i$. 

These contact forms will further provide a family of LCS forms on $S^{1} \times S^{2n-1}$ defined by $$\omega_{a_1, \ldots ,a_{n}}=d_{d\mathrm{vol}}\eta_{a_1, \ldots, a_n}$$ where $d\mathrm{vol}$ is a volume form on $S^1$. The universal covering $\R \times S^{2n-1}$ carries the exact symplectic form $d(e^t\eta_{a_1, \ldots, a_n})$. By dividing it with $e^t$ we obtain the two-form $\frac{d(e^t\eta_{a_1, \ldots, a_n})}{e^t}$ which descends to $\omega_{a_1, \ldots, a_n}$ on $S^1 \times S^{2n-1}$.

Let $T^k$ be the $k$ dimensional torus, with $k \in \{1, 2, \ldots n\}$. One can define the following action of $T^{k}$ on $S^1 \times S^{2n-1}$:
$$(u_1, u_2, \ldots , u_{k}) \cdot (s, (z_1, z_2, \ldots, z_{n}))=(s, (u_1z_1, \ldots, u_{k}z_k,z_{k+1} \ldots, z_n)).$$ 
 This action preserves $d\mathrm{vol}$ and also the contact form $\eta_{a_1, \ldots, a_n}$ for any $n$-tuple $a_1, \ldots, a_n$, hence the LCS form is $T^k$ - invariant. Moreover, the condition $$d\mathrm{vol}(\rho(v))=0$$ is satisfied for any fundamental vector field since its $S^1$-component is 0. From the proof of \ref{fixed} (we need only the vanishing of $d\mathrm{vol}(\rho(v))$, not necessarily a fixed point of the action), the above action is twisted Hamiltonian.

Another approach for this example would be using the isomorphism between $\R \times S^{2n-1}$ and $\C^n\setminus \{0\}$ given by:
$$\psi_{a_1, \ldots, a_n}(t, (z_1, \dots, z_n))=(e^{-ta_1}z_1, \ldots, e^{-ta_n}z_n).$$
Thus, we recover $S^1 \times S^{2n-1}$ as $\C^n\setminus\{0\}/((z_1, \ldots, z_n) \mapsto(e^{a_1}z_1, \ldots e^{a_n}z_n))$. The induced action of $T^k$ on 
$\C^n\setminus\{0\}$ is $$(u_1, \ldots, u_k)\cdot (z_1, \ldots, z_k)=(u_1z_1, \ldots, u_{k}z_k,z_{k+1} \ldots, z_n)$$ and it is Hamiltonian with respect to the symplectic form 
$$\omega=(\psi_{a_1, \ldots, a_n})_*(d(e^t\eta_{a_1, \ldots, a_n})).$$ The hypothesis conditions in \ref{rem} are satisfied, therefore the action is twisted Hamiltonian. 

The motivation for considering this example  actually comes from complex geometry. However, we do not need any complex structure for our purposes, but since the action we consider happens to be holomorphic, our setting fits the situations described in \cite{gop}. Another way to see the action is indeed twisted Hamiltonian is the following: Hopf manifolds are Vaisman, as proven in \cite{ko}. Moreover, the torus $T^k$ acts by Vaisman automorphisms as defined in \cite{gop}. Theorem 5.15 in \cite{gop} proves that every group acting by Vaisman automorphisms has a twisted Hamiltonian action.
\end{example}

\begin{remark}\label{conbun} 
  The above example is related to contact fibre bundles in the following way.

Let $P$ be a principal $T^{k}$ - bundle. The product $P \times_{T^{k}}(S^1 \times S^{2n-1})$ is  equal to $(P \times_{T^{k}}S^{2n-1}) \times S^1$, since the action  of $T^k$ is trivial on $S^1$. Nevertheless, $T^{k}$ preserves $\eta_{a_1, \ldots, a_n}$, hence $T^{k}$ acts on $S^{2n-1}$ by contactomorphisms and the contact momentum map is given by $v \mapsto \eta_{a_1, \ldots, a_n}(\rho(v))$. Thus, the twisted momentum map on $S^1 \times S^{2n-1}$ and the contact momentum map on $S^{2n-1}$ have the same image in the dual of the Lie algebra of the torus,  $(\mathbb{R}^{k})^*$. Choosing a connection in $P$ which is fat on the points in the image of the momentum map will produce a contact form on $P \times_{T^{k}} S^{2n-1}$, according to  \cite[Theorem 3.3]{l}, and hence an LCS structure on $(P \times_{T^{k}} S^{2n-1}) \times S^1$.

\end{remark}

\begin{example} {For any manifold $M$, $T^*M$ has a locally conformally symplectic structure given by the Lee form $\pi^{*}\alpha$ and the LCS form $d_{\pi^*\alpha}\theta$, where $\alpha$ is any closed form on $M$, $\theta$ is the Liouville form on $T^*M$ and $\pi : T^*M \rightarrow M$ is the natural projection, \cite{haller}.}

Let  $G$ be a {\em compact} Lie group acting on $M$ with at least one fixed point. We lift the action of $G$ to $T^{*}M$. This action will preserve $\theta$. As $G$ is compact, we can average any closed 1-form on $M$ and obtain a $G$-invariant closed one-form $\alpha$. The action of $G$ on $T^{*}M$ will also preserve $\pi^{*}\alpha$ and since $G$ acts with at least one fixed point, any $G$-invariant one-form $\beta$ evaluated in a fixed point $p$ will provide a fixed point $\beta_p$ for the action of $G$ on $T^*M$, recovering thus the setting from  \ref{fixed}. On the other hand, $(T^*M, d_{\pi^*\alpha}\theta, \pi^{*}\alpha)$ is not globally conformal symplectic unless $\alpha$ is an exact form on $M$, and hence non-trivial examples are obtained for avery $M$ with non-trivial $H^1(M,\R)$.
\end{example}

\smallskip

\begin{example}\label{exinoue}
We present an $S^1$-action on the Inoue surface $S^+_{N, p, q,r,t}$ which can also be found in \cite{k}, that preserves the LCS form given in  \cite{t}, is not twisted Hamiltonian, but its lift to an action of $\R$ on the universal covering is Hamiltonian. This shows the relevance of the minimal covering in \ref{minimal} and relates directly to \ref{inoue}.

We recall the definition of $S^+_{N, p, q,r,t}$, see \cite{i}. Let $p, q, r$ be integers with  $r\neq 0$ and let $t$ be a real number. Let  $N\in \mathrm{SL}(2,\Z)$, with eigenvalues  $\alpha$ and $\tfrac{1}{\alpha}$, and let $(a_1, a_2)$ and $(b_1, b_2)$ be two corresponding eigenvectors. We denote by $(c_1, c_2)$ the solution of the equation:
$$(c_1, c_2)=(c_1, c_2) \cdot N^{ T}+(e_1, e_2)+\frac{b_1a_2-b_2a_1}{r}(p, q),$$
where 
$$e_i=\tfrac{1}{2}n_{i1}(n_{i1}-1)a_1b_1+\tfrac{1}{2}n_{i2}(n_{i2}-1)a_2b_2+n_{i1}n_{i2}b_1a_2, \quad i=1,2.$$
Let $\H$ denote the upper half plane $\{w\in\C\,;\, \Im w>0\}$ and call  $G^+$  the group of automorphisms of $\mathbb{H} \times \mathbb{C}$ generated by:
$$g_{0}(w, z)=(\alpha w, z+t)$$
$$g_{i}(w,z)=(w+a_i, z+b_iw+c_i)$$
$$g_3(w, z)=(w, z+ \tfrac{b_1a_2-b_2a_1}{r}).$$
The $S^+_{N, p, q, r, t}$ Inoue surface is defined as $(\mathbb{H} \times \mathbb{C})/G^+$. According to \cite{t}, the LCS form carried by this complex surface is written on the cover $\mathbb{H} \times \mathbb{C}$ as:
$$\tilde{\omega}=-\mathrm{i}\left(\frac{1+(z_2)^2}{(w_2)^2}dw \wedge d\bar{w} - \frac{z_2}{w_2}(dw \wedge d\bar{z} + dz \wedge d\bar{w}) + dz \wedge d\bar{z}\right).$$
The Lee form is written on $\H \times \C$ as $d \mathrm{ln} w_2$ and it is easy to see that indeed, $d\tilde{\omega}=d \mathrm{ln} w_2 \wedge \tilde{\omega}$. We denote by $s$ the real number $\frac{b_1a_2-b_2a_1}{r}$ and regard the circle $S^1$ as the quotient $\R/\sim$, where $x\sim x+s$.

We define an $S^1$-action on $S^+_{N, p, q, r, t}$ by:
$$[x] \cdot \widehat{(w, z)}=\widehat{(w, z+x)}.$$ 
Direct computations show that this is indeed a well defined action, preserving the LCS structure. This $S^1$ actions  lifts                                                                                               to an action of $\R$ on $\H \times \C$: 
$$x \cdot (w, z)=(w, z+x),$$
which clearly leaves the symplectic form $\tfrac{1}{w_2}\tilde{\omega}$ invariant,and  hence it is Hamiltonian on the universal cover of $S^+_{N, p, q, r, t}$. However, the $S^1$ action is not twisted Hamiltonian. Indeed, if it was twisted Hamiltonian, then, according to \ref{inoue}, one could  choose a Hamiltonian function on $\H \times \C$ which is automorphic. Then each Hamiltonian function is left invariant by all the isometries in $\mathrm{Deck}((\H \times \C) /S^+_{N, p, q, r, t})$. But the fundamental vector field corresponding to the element $1$ from the Lie algebra (equal to $\R$) is $\partial_z + \partial_{\bar{z}}$ and its Hamiltonian function is $-2\frac{z_2}{w_2}$ and it is not preserved by the isometries $g_2$ and $g_3$ from $G^+$, which is the deck group of the covering, contradiction.
\end{example}

In particular, this proves:
\begin{corollary} $H^1_{\theta}(S^+_{N, p, q, r, t})$ is nontrivial.
\end{corollary}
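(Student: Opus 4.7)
The plan is to deduce the corollary as the contrapositive of Proposition \ref{ant} applied to the $S^1$-action constructed in Example \ref{exinoue}. Recall that \ref{ant} asserts that, on an LCS manifold with vanishing $H^1_\theta$, a $\omega$-preserving action is twisted Hamiltonian if and only if the Lee form annihilates the fundamental vector fields. Thus, exhibiting any $\omega$-preserving action which satisfies the pointwise condition $\theta(\rho(v))=0$ but fails to be twisted Hamiltonian immediately forces $H^1_\theta \neq 0$.

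First I would record that the $S^1$-action from \ref{exinoue} preserves both $\omega$ and (hence) $\theta$ on $S^+_{N,p,q,r,t}$, since this was verified in the example. Next I would check that $\theta$ vanishes on the fundamental vector field. On the universal cover $\H\times\C$, the Lee form lifts to $d\ln w_2$, which depends only on the $\H$-factor, while the lifted action $x\cdot(w,z)=(w,z+x)$ produces the fundamental vector field $\partial_z+\partial_{\bar z}$, tangent to the $\C$-factor. Consequently $d\ln w_2\,(\partial_z+\partial_{\bar z})=0$, and this identity descends to $\theta(\rho(v))=0$ on $S^+_{N,p,q,r,t}$.

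Finally, the example itself establishes that the $S^1$-action is \emph{not} twisted Hamiltonian: any candidate Hamiltonian function on the universal cover for $\partial_z+\partial_{\bar z}$ must (up to an additive constant) equal $-2z_2/w_2$, which fails to be automorphic with respect to the deck transformations $g_2$ and $g_3$, contradicting the automorphy criterion used in \ref{inoue}. Combining these two facts with \ref{ant}, the hypothesis $H^1_\theta(S^+_{N,p,q,r,t})=0$ would force the action to be twisted Hamiltonian, a contradiction. Hence $H^1_\theta(S^+_{N,p,q,r,t})$ must be nontrivial. The only potential subtlety is being careful that the invariance and non-twisted-Hamiltonian statements in \ref{exinoue} are exactly what \ref{ant} needs, but this matches verbatim, so no further work is required.
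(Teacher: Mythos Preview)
Your proposal is correct and follows essentially the same route as the paper: both verify $\theta(\rho(v))=0$ on the cover via $d\ln w_2(\partial_z+\partial_{\bar z})=0$, then invoke the contrapositive of \ref{ant} together with the non-twisted-Hamiltonian conclusion of \ref{exinoue}. The only cosmetic difference is that the paper spells out the intermediate step that $i_{\rho(v)}\omega$ is $d_\theta$-closed, whereas you package this inside the citation of \ref{ant}.
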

\begin{proof} We consider the action described in \ref{exinoue} and use the same notations. We first notice that $\theta(\rho(v))=0$ for any real number $v$. This happens because $\pi^*\theta(v(\partial_z+\partial_{\bar{z}}))=d\mathrm{ln}w_2(v(\partial_z+\partial_{\bar{z}}))=0$ and combined with the fact that $\omega$ is invariant under the action of $S^1$, one obtains that $i_{\theta}\rho(v)$ is $d_{\theta}$-closed. If $H^1_{\theta}(S^+_{N, p, q, r, t})$ vanished, then the action would be twisted Hamiltonian, which is not the case. 
\end{proof}

\section{Compatibility of the bundle construction with LCS reduction}

In this section we study compatibility properties with respect to locally conformally symplectic reduction. For that we need to consider only contexts when {\em the same  group $G$ acts on the fiber and on the total space of the fibration in a compatible way}. The following results show that a favorable case for this to happen is when $G$ is abelian.

\begin{proposition}
 If $G$ is an abelian Lie group acting twisted Hamiltonian on the LCS manifold $(F, \omega, \theta)$, then the Hamiltonian function of every fundamental vector field is $G$-invariant.
\end{proposition}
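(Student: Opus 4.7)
The plan is to differentiate the defining relation $i_{\rho(v)}\omega = d_\theta\mu_v$ along a second fundamental vector field $\rho(w)$ and show that $\mathcal{L}_{\rho(w)}\mu_v$ vanishes. The two ingredients are abelianness of $G$, which makes fundamental vector fields commute, and the vanishing of $H^0_\theta(F)$, which is implicit through the standing hypothesis that $F$ is not globally conformally symplectic (so that the Hamiltonian function is well-defined to begin with, via \ref{rem1}).

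First I would record two preparatory observations: (i) since $\omega$ is non-degenerate and $G$-invariant and $d\omega=\theta\wedge\omega$, the Lee form $\theta$ is also $G$-invariant, and consequently $\mathcal{L}_{\rho(w)}$ commutes with $d_\theta$, because
\begin{equation*}
\mathcal{L}_{\rho(w)}(d\alpha-\theta\wedge\alpha)=d\mathcal{L}_{\rho(w)}\alpha-\theta\wedge\mathcal{L}_{\rho(w)}\alpha=d_\theta\mathcal{L}_{\rho(w)}\alpha;
\end{equation*}
(ii) abelianness of $G$ together with $[\rho(v),\rho(w)]=-\rho([v,w])$ gives $[\rho(w),\rho(v)]=0$. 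Applying $\mathcal{L}_{\rho(w)}$ to both sides of $i_{\rho(v)}\omega=d_\theta\mu_v$, Cartan's formula on the left collapses to
\begin{equation*}
\mathcal{L}_{\rho(w)}i_{\rho(v)}\omega=i_{[\rho(w),\rho(v)]}\omega+i_{\rho(v)}\mathcal{L}_{\rho(w)}\omega=0,
\end{equation*}
while the right-hand side becomes $d_\theta(\mathcal{L}_{\rho(w)}\mu_v)$ by (i). Thus $\mathcal{L}_{\rho(w)}\mu_v$ is a $d_\theta$-closed function.

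By \ref{rem1}, $H^0_\theta(F)=0$, so $\mathcal{L}_{\rho(w)}\mu_v=0$ for every $w\in\g$, giving invariance of $\mu_v$ under the identity component $G_0$. For disconnected $G$ one upgrades this to full $G$-invariance by uniqueness of the Hamiltonian function, which is itself a consequence of $H^0_\theta(F)=0$: for any $g\in G$, abelianness gives $g_*\rho(v)=\rho(v)$, so $g^*\mu_v$ satisfies $i_{\rho(v)}\omega=d_\theta(g^*\mu_v)$, forcing $g^*\mu_v=\mu_v$. The only subtlety is the implicit appeal to $H^0_\theta(F)=0$; without the non-globally-conformally-symplectic hypothesis one would only be able to assert the existence of a $G$-invariant choice of Hamiltonian function, which is the natural reading of the statement in that broader setting.
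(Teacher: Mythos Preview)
Your proposal is correct, and in fact your final paragraph \emph{is} the paper's proof: pull back $i_{\rho(v)}\omega=d_\theta\mu_v$ by $g\in G$, use $G$-invariance of $\omega$, $\theta$, and $\rho(v)$ (the last from abelianness) to get $d_\theta(g^*\mu_v)=d_\theta\mu_v$, and conclude $g^*\mu_v=\mu_v$ from $H^0_\theta(F)=0$.

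The infinitesimal argument you carry out first (applying $\mathcal{L}_{\rho(w)}$ and using $[\rho(w),\rho(v)]=0$) is a correct but unnecessary detour: it only yields invariance under $G_0$, and you then have to supply the global pullback argument anyway to cover disconnected $G$. Since that global argument already works for every $g\in G$, connected component or not, the paper simply runs it once and omits the infinitesimal version entirely. Your route costs nothing in correctness, but the shorter path is to start directly with the pullback by an arbitrary $g$.
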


\begin{proof}
 The commutativity of $G$ assures that every fundamental vector field $\rho(v)$ is $G$-invariant. Let $h$ be the unique Hamiltonian function for $\rho(v)$. Then $i_{\rho(v)}\omega=d_\theta h$. Moreover, since $\omega$, $\theta$ and $\rho(v)$ are $G$-invariant, $d_{\theta}g^*h=d_\theta h$ for any element $g$ in $G$. However, $H^0_{\theta}(F)$ vanishes (see \ref{rem1}), as $\theta$ is not exact, so $g^*h=h$.
\end{proof}

\begin{proposition}\label{fundamental} If $G$ is an abelian Lie group acting twisted Hamiltonian on $F$ and $P$ is a $G$-principal bundle with a fat connection, then $G$ acts twisted Hamiltonian on $P \times_G F$ with respect to the coupling form constructed in \ref{main}.
\end{proposition}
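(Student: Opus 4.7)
The plan is first to describe the $G$-action on $P\times_G F$, then identify the coupling form and its Lee form as $G$-invariant, and finally exhibit the momentum map by descending the Hamiltonians from $F$.

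Because $G$ is abelian, the prescription $g\cdot[p,f]:=[p,gf]$ is well defined: the map $(p,f)\mapsto(p,gf)$ is equivariant under the quotient action $(p,f)\mapsto(ph,h^{-1}f)$ precisely because $g$ and $h^{-1}$ commute in $G$. The fundamental vector field of this new $G$-action on $P\times_G F$ is
$$\hat\rho(v)_{[p,f]}=\pi_{*,(p,f)}\bigl(0,\tilde\rho(v)(f)\bigr),\qquad v\in\g,$$
and is therefore vertical with respect to the fibration $F\to P\times_G F\to B$.

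Next I would verify that $\Omega$ and $\Theta$ are $G$-invariant. The horizontal distribution $\mathcal H$ is preserved by $g$ because the lift to $P\times F$ of $g$ is $(p,f)\mapsto(p,gf)$, whose differential is the identity on the $H_p\times\{0\}$ summand. On vertical pairs $\Omega$ reduces to $\omega_b$, which is $G$-invariant by hypothesis; on mixed pairs both sides vanish; on horizontal pairs one has $\Omega_{[p,f]}(X,Y)=\mu(f)(A_p(X',Y'))$, so the $G$-invariance of $\Omega$ amounts to the $G$-invariance of $\mu$, which follows from the previous proposition (every twisted Hamiltonian $h_v$ is $G$-invariant) combined with the triviality of the coadjoint action of the abelian group $G$. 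The Lee form $\Theta$ is then $G$-invariant either by uniqueness, or directly from $\pi^*\Theta=(0,\theta)$ and the invariance of $\theta$.

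For the momentum map I would pick, for each $v\in\g$, the unique twisted Hamiltonian $h_v\in C^\infty(F)$ satisfying $i_{\tilde\rho(v)}\omega=d_\theta h_v$ (uniqueness from the vanishing of $H^0_\theta(F)$, cf.\ \ref{rem1}); by the previous proposition $h_v$ is $G$-invariant, so $(p,f)\mapsto h_v(f)$ is basic on $P\times F$ for the quotienting action and descends to a smooth function $H_v$ on $P\times_G F$ with $\pi^*H_v=h_v\circ\mathrm{pr}_F$. The identity $i_{\hat\rho(v)}\Omega=d_\Theta H_v$ is checked on the two types of test vectors: for vertical $Z=\pi_*(0,Y)$ the left-hand side is $\omega_b(\tilde\rho(v),Y)$, while the right-hand side pulls back to $(dh_v-h_v\theta)(Y)=d_\theta h_v(Y)=i_{\tilde\rho(v)}\omega(Y)$, using $\pi^*\Theta=(0,\theta)$; for horizontal $X$ both sides vanish, since $\Omega(\hat\rho(v),X)=0$ by the definition of the coupling form, $\Theta$ vanishes on $\mathcal H$, and $H_v$ is constant along $\mathcal H$ (horizontal vectors lift to $H_p\times\{0\}$ in $P\times F$, in whose direction $h_v$ is constant).

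The step carrying the actual content is the $G$-invariance of $\mu$ and the consequent global descent of the Hamiltonians to $P\times_G F$. Without the abelian hypothesis the functions $h_v$ would in general fail to be $G$-invariant, $H_v$ would not be basic, and the construction of a global twisted Hamiltonian would break down; the remainder of the verification is then just bookkeeping between the horizontal and vertical components of the coupling form and its Lee form.
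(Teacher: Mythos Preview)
Your proof is correct and follows the same strategy as the paper: define the $G$-action on the associated bundle via $g\cdot[p,f]=[p,gf]$, descend the fiber Hamiltonians using their $G$-invariance to obtain the momentum map, and verify both the momentum-map identity and the $G$-invariance of $\Omega$ componentwise. The only difference is in checking $d_\Theta H_v(X)=0$ for horizontal $X$: you observe directly that horizontal vectors lift to $H_p\times\{0\}$ in $P\times F$, where $h_v\circ\mathrm{pr}_F$ is manifestly constant, whereas the paper argues via the vanishing of the bracket $[V^*,\hat\rho(v)]$ (lifting to $[V^{**},\rho(v)]=0$ on $P\times F$) and then invokes the uniqueness of twisted Hamiltonian functions to conclude $\mathcal L_{V^*}H_v=0$. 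Your route is the more direct of the two.
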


\begin{proof} Define the action of $G$ on $P \times_G F$ by:
$$g \cdot [p, f]=[p, g\cdot f].$$
This is well defined as a consequence of the group $G$ being abelian. In order to check this action is indeed twisted Hamiltonian, we observe that the fundamental vector fields $\tilde{\rho}(v)$ (for any $v$ in $\mathfrak{g}$) are vertical and fiberwise twisted Hamiltonian, since $\tilde{\rho}(v)=\pi_{*}(0, \rho(v))$, where $\pi: P \times F \rightarrow P \times_G F$ is the projection. 

If $\Omega$ is the coupling form and $\Theta$ is its corresponding Lee form, we shall prove that $i_{\tilde{\rho}(v)}\Omega$ is $d_{\Theta}$-exact.
Let $h$ be the Hamiltonian function of $\rho(v)$. According to \ref{fundamental}, $h$ is $G$-invariant. We denote by $\tilde{H}: P \times F \rightarrow \R$ the function $\tilde{H}(p, f)=h(f)$. Since $\tilde{H}$ is also $G$-invariant, it descends to a function $H: P \times_G F \rightarrow \R$, such that $H([p, f])=h(f)$. Moreover, $H$ has the property that restricted to any fiber $F_b$ is the Hamiltonian function associated to $\rho(v)$. We prove the following: 
\begin{equation}\label{eee}
i_{\tilde{\rho}(v)}\Omega=d_{\Theta}H.
\end{equation}
Equality \eqref{eee} is obviously true on vertical vector fields, since its restriction to the fiber $F_b$ reduces to the action of $G$ on $F$ being twisted Hamiltonian. We have to prove next that \eqref{eee} also holds on horizontal vector fields.  

Let $V^*$ be the lift to $P \times_G F$ of the vector field $V$ on $P/G$. As shown in the proof of \ref{main},  $[V^*, \tilde{\rho}(v)]$ is a fiberwise Hamiltonian vector field, whose Hamiltonian function is $\mathcal{L}_{V^*}H$.  However, $[V^*, \tilde{\rho}(v)]$ is $\pi_*[V^{**}, \rho(v)]$, where $V^{**}$ is the lift of $V$ to $P$, thus the Lie bracket vanishes and therefore, $\mathcal{L}_{V^*}H=0$. Then the following holds:
$$d_{\Theta}H(V^*)=dH(V^*) - H\Theta(V^*)=0=i_{\tilde{\rho}(v)}\Omega(V^*),$$
as $\Theta$  vanishes on basic vector fields. Thus, we have proved that \eqref{eee} is indeed satisfied. 

According to our definition of twisted Hamiltonian action, we still have to show that for any $g$ in $G$:
\begin{equation}\label{invar}
g^*\Omega=\Omega. 
\end{equation}
For $X$ and $Y$ vertical vector fields, \eqref{invar} means the $G$-invariance of $\omega$, which is granted. For $X$ horizontal and $Y$ vertical, $\Omega(X, Y)$ vanishes and as the action of $G$ on $P \times_G F$ preserves the horizontality and verticality of vector fields, \eqref{invar} holds whenever $X$ is horizontal and $Y$ is vertical. We are left with verifying that \eqref{invar} is true also when $X$ and $Y$ are both horizontal. By using the fact that for any $g$ in $G$ and $X$ horizontal, $g_{*}X=X$ and the $G$-invariance of the momentum map (as a consequence of the group being abelian), one gets: $$\mu(A(X, Y))=\mu(A(g_*X, g_*Y)),$$
which is the definition of $\Omega$ on horizontal vector fields and concludes the $G$-invariance of $\Omega$.
\end{proof}

\medskip

Let now $\tilde{\mu}$ be the momentum map of the action of $G$ on $P \times_G F$. Since $G$ is abelian, both $\mu$ and $\tilde{\mu}$ are $G$-invariant. The momentum map $\tilde{\mu}$ can be expressed as:
$$\tilde{\mu}([p, f])(v)=H([p, f])=h(f)=\mu(f)(v).$$
So, the relation between $\tilde{\mu}$ and $\mu$ is the following:
\begin{equation}\label{moment}
\tilde{\mu}([p, f])=\mu(f).
\end{equation}

Usually, the reduction process is performed at 0. In the present case, the reduction can be performed at any regular value, since the momentum maps are $G$-invariant. For simplicity, however, we only consider reduction at 0.  

As a consequence of \eqref{moment}, the following holds:
\begin{equation}\label{000}
\tilde{\mu}^{-1}(0)=P \times_G \mu^{-1}(0).
\end{equation}

We use locally conformally symplectic reduction as presented in \cite{hr} or \cite{gopp}  and assume that 0 is a regular value and $G$ acts freely and properly on $\mu^{-1}(0)$. By \eqref{000}, we see that the action of $G$ on $\tilde{\mu}^{-1}(0)$ is still free and proper. Thus, both $\tilde{\mu}^{-1}(0)/G$ and $\mu^{-1}(0)/G$ are manifolds. We are now ready to prove the following compatibility theorem.
\begin{theorem}\label{comp}
Let $G$ be an abelian Lie group acting twisted Hamiltonian on the LCS manifold $(F, \omega, \theta)$ and $P$ a principal $G$-bundle with a fat connection at $\Im(\mu)$.Then $G$ acts twisted Hamiltonian with respect to the coupling form $\Omega$ and assuming all conditions for reduction at 0 are met, the relation between the two reduced manifolds is:  
$$\tilde{\mu}^{-1}(0)/G \simeq P/G \times \mu^{-1}(0)/G.$$
\end{theorem}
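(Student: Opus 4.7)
The first assertion, that $G$ acts twisted Hamiltonian on $(P \times_G F, \Omega)$, is already contained in Proposition \ref{fundamental}, so the content of the theorem is the diffeomorphism $\tilde{\mu}^{-1}(0)/G \simeq P/G \times \mu^{-1}(0)/G$. The plan is to exploit equation \eqref{000}, which identifies $\tilde{\mu}^{-1}(0) = P \times_G \mu^{-1}(0)$, and then to analyse the residual $G$-action $g \cdot [p, f] = [p, gf]$ on this associated bundle. Lifting to the product $P \times \mu^{-1}(0)$, two $G$-actions are at play: the one $(p, f) \cdot g = (pg, g^{-1}f)$ used to form the associated bundle, and the one $g \cdot (p, f) = (p, gf)$ lifting the residual action. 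Since $G$ is abelian these commute, and their combined $G \times G$-orbit space realises $\tilde{\mu}^{-1}(0)/G$ as a double quotient.

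The decisive step is the reparametrization $(g_1, g_2) \mapsto (h_1, h_2) := (g_1, g_1^{-1} g_2)$, which is a group automorphism of $G \times G$ precisely because $G$ is abelian, and which converts the combined action $(g_1, g_2) \cdot (p, f) = (p g_1, g_1^{-1} g_2 f)$ into the plain product action $(h_1, h_2) \cdot (p, f) = (p h_1, h_2 f)$. The orbits therefore decouple: the orbit of $(p, f)$ is $(pG) \times (Gf)$, and the evaluation map
$$\Phi \colon P \times \mu^{-1}(0) \longrightarrow (P/G) \times (\mu^{-1}(0)/G), \qquad (p, f) \longmapsto ([p], [f]),$$
is $G \times G$-invariant with fibres exactly the combined orbits, so it descends to a bijection $\bar{\Phi}\colon \tilde{\mu}^{-1}(0)/G \to (P/G) \times (\mu^{-1}(0)/G)$.

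The main expected obstacle is not computational but organisational: one must verify that the reduction hypotheses genuinely pass to $\tilde{\mu}^{-1}(0)/G$. By \eqref{moment} the value $0$ is regular for $\tilde{\mu}$ iff it is for $\mu$, and by \eqref{000} the residual $G$-action on $P \times_G \mu^{-1}(0)$ inherits freeness and properness from the corresponding action on $\mu^{-1}(0)$: if $[p, gf] = [p, f]$, then $(p, gf) = (ph, h^{-1}f)$ for some $h \in G$, so $h = e$ (since $G$ acts freely on $P$) and then $g = e$. With these checks in place, $\bar\Phi$ becomes a diffeomorphism by the standard slice argument for free and proper actions, and the theorem is established. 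Conceptually, it expresses the fact that the abelian hypothesis decouples the diagonal and the residual $G$-actions on $P \times \mu^{-1}(0)$ completely.
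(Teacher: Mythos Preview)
Your proposal is correct and arrives at precisely the same diffeomorphism the paper writes down, namely $\widehat{[p,f]}\mapsto(\hat p,\hat f)$; the paper's own proof consists of one sentence defining this map and its inverse and asserting well-definedness. Your version is more detailed in that you lift to $P\times\mu^{-1}(0)$, treat the two $G$-actions simultaneously, and use the automorphism $(g_1,g_2)\mapsto(g_1,g_1^{-1}g_2)$ of $G\times G$ to see the orbits decouple---this is really just an explicit mechanism for the well-definedness the paper takes for granted, together with checks on regularity, freeness and properness that the paper also records in the discussion preceding the theorem. So the approach is essentially the same, only with the bookkeeping made more visible.
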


\begin{proof}
We define $f: \tilde{\mu}^{-1}(0)/G \rightarrow P/G \times \mu^{-1}(0)/G$, $f(\widehat{[p, f]})=(\hat{p}, \hat{f})$. By using only the way $G$ acts on $F$ and $P \times_G F$, $f$ is well defined with the inverse $l:P/G \times \mu^{-1}(0)/G \rightarrow \tilde{\mu}^{-1}(0)/G$, $l(\hat{p}, \hat{f})=\widehat{[p, f]}$. The map $f$ is the diffeomorphism we want. 
\end{proof} 

\begin{remark}
 It may happen that the base $P/G$  carry a symplectic structure. As previously stated, when $G$ is $S^1$ there is a correspondence between fat connections on a circle bundle and integral symplectic forms of the base. Since both reduced spaces have LCS structures by \cite[Theorem 1]{hr}, it means that a product between a  symplectic manifold and a LCS one may be LCS. This is unlike the LCK case, where under compactness conditions, the product of LCK with K\" ahler is never LCK, as proved in \cite{opv}. 
\end{remark}

\section{Locally conformally K\" ahler bundles}

Here we address the same problems we did so far for locally conformally K\" ahler
manifolds, namely defining LCK fiber bundles and finding conditions for the total space to admit an LCK structure which restricts to the one of the fibers. We necessarily have to consider only the case when the fiber is a complex submanifold of the bundle. 

Recall that a LCK manifold $(M,\omega,J)$ is a Hermitian manifold whose fundamental two form $\omega$ satisfies the LCS definition for a closed one-form $\theta$.

\begin{definition} Let $F \rightarrow M \rightarrow B$ be a fiber bundle with  $(F, \omega, J)$ an LCK manifold, which is a complex submanifold of $(M, J)$. We call it \emph{LCK fiber bundle} if its structure group consists of holomorphic $\omega$-preserving diffeomorphisms. 
\end{definition}

\begin{remark} By adding the extra condition of holomorphicity of the transitions maps, we ensure there exists a canonical complex structure on each fiber $F_b$, which we denote by $J_b$.
\end{remark}

\begin{remark}The problem of finding an LCK structure on $M$ which restricts to $\omega$ on the fiber $F$ does not reduce to regarding $F$ and $M$ only as LCS manifolds. Indeed, if we forget the complex structure and look only for conditions of existence of an LCS extension of $\omega$, which is actually the first step to consider, there is no guarantee the extension we obtain satisfies also $J$ - invariance and positiveness in order to come from an LCK metric. Hence working in the complex brings new constraints for the LCS extensions.
\end{remark}

\smallskip

Let us consider the same setting as in \ref{main}: suppose we have  an LCK fiber bundle with total space $P \times_G F$. We further assume that $P/G$ is a complex manifold, endowed with a complex structure $J_1$.  With these assumptions, we are able to construct the following almost complex structure $\tilde{J}$ on $P \times_G F$:

$$\tilde{J}(X) = \begin{cases} J_b(X) &\mbox{if } X \text{ is a  vertical vector tangent to } F_b,\\
(J_1(\pi_*X))^* &\mbox{if } X \text{ is horizontal}. \end{cases}$$

Clearly, $\tilde{J}$ satisfies the condition $\tilde{J} \circ \tilde{J}=-\id$. We are interested in finding a favorable context for the coupling form to be LCK with respect to $\tilde{J}$. For that, we need to investigate first when $\tilde{J}$ is integrable and hence we look at the Nijenhuis tensor.

We recall that the Nijenhuis tensor defined as 
$$N_J(X, Y)=[X, Y]-[JX, JY]+J[JX, Y]+J[X, JY]$$
vanishes if and only if $J$ is integrable, hence a complex structure. Computing now $N_{\tilde{J}}$, we obtain:
$$N_{\tilde{J}}(X, Y)=0,$$
whenever $X$ and $Y$ are vertical vector fields, since for any point $p$ in $P \times_G F$ with $\pi(p)=b$:
$$N_{\tilde{J}}(X, Y)(p)=N_{J_b}(X_{|F_b}, Y_{|F_b})(p)=0.$$
If $X_p$ and $Y_p$ are horizontal vectors, we can compute $N_{\tilde{J}}(X, Y)_p$ by taking field extensions of $X_p$ and $Y_p$ which are
 lifts of fields from the base, denoted by $X^*$ and $Y^*$.  Then by using the formula of the curvature $[X^*, Y^*]=[X, Y]^*+\mathcal{R}_{\mathcal{H}}(X, Y)$, the definition of $\tilde{J}$ and the fact that $J_1$ is integrable, we get that:
$$N_{\tilde{J}}(X^*, Y^*)=J(\mathcal{R}_{\mathcal{H}}(J_1X, Y) + \mathcal{R}_{\mathcal{H}}(X, J_1Y)) +\mathcal{R}_{\mathcal{H}}(X, Y) - \mathcal{R}_{\mathcal{H}}(J_1X, J_1Y).$$
An LCK form is necessarily of type (1,1), hence $\tilde{J}$-invariant. This condition applied to the coupling form would automatically imply the $\tilde{J}$-invariance of the curvature form with respect to the complex structure on the base, $J_1$. But this will further yield the vanishing of the Nijenhuis tensor applied on horizontal vector fields. 
It remains to deal with the vanishing of $N_{\tilde{J}}(X, Y)$, when $X$ is horizontal and $Y$ is vertical and eventually the positiveness of the coupling form. We cannot control these two conditions, and hence in general {\em the almost complex structure $\tilde J$ is not integrable}. 

It is an open question so far to find sufficient conditions for $\tilde J$ to be integrable.

\bigskip

\noindent{\bf{Acknowledgements.}} I express my thanks to Liviu Ornea for his original suggestion, his constant support and encouragement, to Sergiu Moroianu, Alexandru Oancea and Victor Vuletescu, who have provided me with valuable ideas and to Izu Vaisman for useful remarks and bibliographical data. I also thank the referee for carefully reading a first version of this paper and for his/her recommendations.

\bigskip

\end{document}